\documentclass[12pt,reqno]{amsart}
\usepackage[a4paper,margin=1in]{geometry}

\usepackage{microtype}
\usepackage[cal=euler,scr=boondoxo]{mathalfa}
\usepackage[colorlinks,allcolors=blue]{hyperref}
\usepackage[capitalize]{cleveref}
\crefname{equation}{}{}

\hypersetup{
  pdftitle={Quasi-modularity of symmetric quasi-shuffles},
  pdfauthor={Killian Hong-Minh and Sergey Mozgovoy}
}

\usepackage{etoolbox,xspace,bbm}

\def\newthm#1{
  \newtheorem{#1}[thm]{\MakeUppercase#1}
  \AddToHook{env/#1/begin}{\crefalias{thm}{#1}} 
}
\theoremstyle{plain}
\forcsvlist\newthm{theorem,lemma,corollary,proposition,conjecture}

\newtheorem{innercustomthm}{}
\newenvironment{customthm}[1]
  {\def\theinnercustomthm{#1}\innercustomthm}
  {\endinnercustomthm}

\theoremstyle{definition}
\forcsvlist\newthm{definition,remark,example,question,claim}

\def\opn{\operatorname}
\def\opr#1#2{\def#1{\opn{#2}}}
\def\csopr#1#2{\csdef{#1}{\opn{#2}}}
\def\oper#1{\csopr{#1}{#1}}
\def\operl{\forcsvlist\oper}

\forcsvlist\oper{fs,sf,Bl,GL,SL,deg,Hom,End,Vect,Mod,Rep,Ker,Im,Id,trop,tdet,vol, lt, rk, sgn, FY,id,d,Res,Ind,Gr,Spec,Coker,Ob,Sh,Ab,supp,Ext,Exp,ch,Aut,tr,ad,td,Stab,QSym}
\def\Ker{\operatorname{Ker}}

\def\defbb#1{\csdef{b#1}{\mathbb{#1}}}
\forcsvlist\defbb{A,B,C,D,E,F,G,H,I,J,K,L,M,N,O,P,Q,R,S,T,U,V,W,X,Y,Z}

\def\defcal#1{\csdef{c#1}{\mathcal{#1}}}
\forcsvlist\defcal{A,B,C,D,E,F,G,H,I,J,K,L,M,N,O,P,Q,R,S,T,U,V,W,X,Y,Z}

\def\deffrak#1{\csdef{k#1}{\mathfrak{#1}}}
\forcsvlist\deffrak{A,B,C,D,E,F,G,H,I,J,K,L,M,N,O,P,Q,R,S,T,U,V,W,X,Y,Z,a,b,c,d,e,f,g,h,i,j,k,l,m,n,o,p,q,r,s,t,u,v,w,x,y,z}

\def\defsf#1{\csdef{s#1}{\mathsf{#1}}}
\forcsvlist\defsf{A,B,C,D,E,F,G,H,I,J,K,L,M,N,O,P,Q,R,S,T,U,V,W,X,Y,Z,a,b,c,d,e,f,g,h,i,j,k,l,m,n,o,p,q,r,s,t,u,v,w,x,y,z,QS}

\def\al{\alpha}

\def\ga{\gamma}
\def\de{\delta}
\def\la{\lambda}
\def\ta{\tau}
\def\eps{\varepsilon}

\def\si{\sigma}

\def\Ga{\Gamma}

\def\La{\Lambda}

\def\set#1{\left\{#1\right\}}
\def\sets#1#2{\left\{#1\ \middle\vert\ #2\right\}}
\def\rbr#1{\left(#1\right)}
\def\sbr#1{\left[#1\right]}

\def\n#1{\left\lvert#1\right\rvert}

\def\ov#1#2{{\substack{#1\\#2}}} 

\def\smat#1{\rbr{\begin{smallmatrix}#1\end{smallmatrix}}}

\def\ol#1{\overline{#1}}
\def\pser#1{[\![#1]\!]} 
\def\lser#1{(\!(#1)\!)} 

\def\mto{\mapsto}

\def\ts{\otimes}
\def\xx{\times}
\def\iso{\simeq}
\def\sbs{\subset}
\def\tpdf{\texorpdfstring}

\RequirePackage[framemethod=TikZ]{mdframed}
\mdfsetup{backgroundcolor=brown!10, linewidth=0,
  skipabove=5pt, roundcorner=5pt}

\theoremstyle{definition}
\newmdtheoremenv[innertopmargin=-2pt]{note}{Note}

\DeclareDocumentCommand{\idef}{mo}{%
  \ifstrempty{#1}{}{\ifmmode#1\else\emph{#1}\fi}%
	\IfValueTF{#2}{\index{#2}}{\index{#1}}%
}
\def\eg{e.g.\ } 
\def\cf{cf.\ } 
\def\eq#1{\begin{equation}#1\end{equation}}

\usepackage[T2A]{fontenc} 
\def\sha{\mathbin{\textup\cyrsh}}
\def\rsbox#1#2#3{
  \raisebox{#1}{\scalebox{#2}{\ensuremath{#3}}}}
\def\isoto{\xto{\,\smash{\rsbox{-3pt}{.8}\sim}\,}}
\NewDocumentCommand\ubar{O{1.5}O{1.5}m}{%
  \mkern#1mu\overline{\mkern-#1mu#3\mkern-#2mu}\mkern#2mu}

\def\prt{\vdash} 
\def\bop{\bigoplus}
\def\i{^{-1}}
\def\xto{\xrightarrow}

\operl{sh,qsh,Hopf,Alg}
\operl{QS,TS,SQS}
\oper{ord}
\opr\M{\mathbf M} 
\opr\QM{\mathbf{QM}} 
\opr\SS{\mathsf{S}} 

\def\bff{\mathbf f}
\def\con{\mathrm{con}}
\def\nun{\mathrm{nu}} 
\def\bdd{\mathrm{bd}}

\def\bn{\mathbf n}

\def\pN{\bZ_{\ge1}}
\def\bbR{\ubar[3][0] R}
\def\QQ{\bQ\pser q}
\def\QQp{\QQ^+}

\mathchardef\col\mathcode`:
\mathchardef\latexequal\mathcode`=
\AtBeginDocument{\mathcode`:="8000\mathcode`=="8000}
\RequirePackage{mathtools} 

\def\colona{\nobreak\mskip2mu\mathpunct{}
  \mkern-\thinmuskip{\col}\nonscript\mskip6mu plus1mu}

{\makeatletter
  \catcode`:=\active\catcode`==\active
  \gdef:{\@ifnextchar={\coloneqq\@gobble}{\colona}}
  \gdef={\@ifnextchar:{\eqqcolon\@gobble}{\latexequal}}
}

\begin{document}
\title{Quasi-modularity of symmetric quasi-shuffles}

\author{Killian Hong-Minh}
\author{Sergey Mozgovoy}

\address{School of Mathematics, Trinity College Dublin, Dublin 2, Ireland
\newline\indent
Hamilton Mathematics Institute, Dublin 2, Ireland}

\email{hongmink@tcd.ie}
\email{mozgovoy@maths.tcd.ie}

\begin{abstract}
We develop an algebraic framework for the quasi-modularity of symmetric multiple $q$-zeta values. We identify natural classes of symmetric quasi-shuffles whose $q$-zeta values exhaust the algebra of level-one quasi-modular forms, and further classes whose $q$-zeta values are quasi-modular forms of finite level. We also obtain explicit symmetrization formulas and relate symmetric quasi-shuffles to symmetric and quasisymmetric functions.
\end{abstract}

\maketitle

\section{Introduction}
Given $f_1,\dots,f_k\in B^+=q\QQ$,
consider the multiple $q$-zeta value
\eq{Z(f_1\ts\dots\ts f_k)
=\sum_{0<n_1<\dots<n_k}\prod_{i=1}^k f_i(q^{n_i})\in \QQ.}
This extends to an algebra morphism
\eq{Z:\QS(B^+)=\bop_{k\ge0}(B^+)^{\ts k}\to \QQ,
}
where $\QS(B^+)$ is the quasi-shuffle algebra of the non-unital algebra $B^+$.
Quasi-shuffle algebras of non-unital associative algebras were introduced in \cite{newman_cofree} under the name
\emph{cofree irreducible Hopf algebras}.
See also \cite{hoffman_quasia,fares_quelques}.

Traditionally, when studying $Z$-values,
one considers an algebra $A\sbs B=\QQ$
and fixes a basis
$\bff=(f_i)_{i\ge1}$ of $A^+=A\cap B^+$
(for example, $f_i=\frac{q^i}{(1-q)^i}$ for
$A=\bQ\sbr{q/\rbr{1-q}}$).
Let
\eq{
Z^\bff(\al_1,\dots,\al_k)
=Z(f_{\al_1}\ts\dots\ts f_{\al_k}),
\qquad \al\in\pN^k.
}
These values span the subalgebra
$Z(\QS(A^+))\sbs\QQ$.
This basis-dependent approach provides a convenient notation for the resulting values, although it may obscure the underlying algebraic structure.
We will be interested in the symmetrizations
\eq{Z^\bff_\la
=\sum_{\al^+=\la}Z^\bff(\al)
=\sum_{\al^+=\la}Z(f_{\al_1}\ts\dots\ts f_{\al_k})\in \QQ,}
where $\la$ is a partition of length $k$,
and $\al^+$, for $\al\in\pN^k$,
denotes the partition obtained by rearranging the entries of $\al$.
For example, $Z^\bff_{(r^k)}=Z(f_r^{\ts k})$.

Many interesting $q$-series arise as $Z$-values.
For example, consider the Eisenstein series (for even $k>0$)
\eq{E_k=1-\frac{2k}{B_k}\sum_{n,d\ge1}n^{k-1}q^{nd},}
where $B_k\in\bQ$ are Bernoulli numbers satisfying $\sum_{k\ge0}B_kt^k/k!=\frac t{e^t-1}$.
Using the operator $D=q\frac d{dq}$ and the Adams operations $\psi^d(f)=f(q^d)$, we can write
\eq{
\sum_{n,d\ge1}n^{k-1}q^{nd}
=\sum_{d\ge1}\psi^d\rbr{D^{k-1}\frac q{1-q}}
=Z(Q_k),\qquad Q_k=D^{k-1}\frac q{1-q}.
}
Therefore $Z(Q_k)$ is equal to $E_k-1$ up to a rational scalar.
The Eisenstein series $E_{2k}$ for $k\ge2$ belong to the algebra of modular forms $\M(1;\bQ)=\bQ[E_4,E_6]$.
This algebra is not closed under the derivation $D$,
whereas the algebra of quasi-modular forms $\QM(1;\bQ)=\bQ[E_2,E_4,E_6]$ is.

Another example is the MacMahon series
\cite{macmahon_divisors}
\eq{\label{MM}
\sum_{0<n_1<\dots<n_k}\prod_{i=1}^k\frac{q^{n_i}}{(1-q^{n_i})^2}
=Z(Q_2^{\ts k}),
}
where $Q_2=D\frac q{1-q}=\frac q{(1-q)^2}$.
It was proved in \cite{andrews_macmahons,rose_quasi} that this series is quasi-modular.
Quasi-modularity of similar series was also studied in
\cite{amdeberhan_macmahons,nazaroglu_quasimodularity,
bringmann_limiting,kang_quasi,
bachmann_macmahons,
shintani_generalization}.
This paper arose from our attempt to give a purely algebraic
explanation,
rather than a number-theoretic or combinatorial one,
of the above phenomenon.
As a result, we develop an algebraic framework for proving quasi-modularity of a large class of $Z$-values,
including~\cref{MM}.
\medskip

Let $Q_1=\frac{q}{1-q}$ and $Q_2=DQ_1=\frac{q}{(1-q)^2}$ be defined as before.
Consider the algebras
\[R_2=\bQ[Q_2]\sbs R_1=\bQ[Q_1]\sbs R=\bQ\sbr{q,\tfrac1{q-1}}.\]
The image $\cZ_{q}$ of the algebra morphism
\[Z:\QS(R^+_1)\to\QQ\]
is called the \idef{algebra of multiple $q$-zeta values}
(see \eg \cite{bachmann_dimension,hirose_unified,mozgovoy_multiple}).
It contains the algebra $\QM(1;\bQ)$ of quasi-modular forms.
We have
\[R_1=\bop_{k\ge0}\bQ Q_{k},\qquad
R_2=\bop_{k\ge0}\bQ Q_{2k},\]
where $Q_0=1$.
An alternative description of $R_2$ is
\[R_2
=R_1^\ta=R^\ta,\]
where
$\ta$ is an involution of $\bQ(q)$ given by $\ta(f)=f(q\i)$,
and $R^\ta$ is the $\ta$-invariant locus of $R$.
\medskip

In this paper, we study the algebra morphism
$Z:\QS(R^+_2)\to\QQ$ restricted to symmetric tensors.
More precisely, given a (non-unital) algebra $A$, define the subspace of symmetric tensors
\[\TS(A)=\bop_{n\ge0}(A^{\ts n})^{S_n}\sbs
\bop_{n\ge0}A^{\ts n}=\QS(A).\]
We will prove in \cref{SQS:th} that it is a subalgebra of $\QS(A)$
generated by $A$.

\begin{theorem}\label{thm1}
The image of the map
\[Z:\TS(R^+_2)\to\QQ\]
is equal to $\QM(1;\bQ)$, the algebra of (rational, level 1) quasi-modular forms.
\end{theorem}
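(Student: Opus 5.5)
Both inclusions need to be checked. Since \cref{SQS:th} says that $\TS(R_2^+)$ is the subalgebra of $\QS(R_2^+)$ generated by $R_2^+$, and $Z$ is an algebra morphism, the image $Z(\TS(R_2^+))$ is the subalgebra of $\QQ$ generated by the values $Z(f)$ for $f\in R_2^+$, taken as tensors of length one. This reduces everything to length one. The space $R_2^+$ has basis $Q_2^m$ for $m\ge1$, or equivalently the $Q_{2k}$ for $k\ge1$; so the image is generated by the series $Z(Q_2^m)=\sum_{n\ge1}Q_2(q^n)^m$.

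\textbf{Inclusion $\supseteq$.} I would use the basis $Q_{2k}$, $k\ge1$, of $R_2^+$. For these we computed $Z(Q_{2k})=\sum_{n,d}n^{2k-1}q^{nd}$, which is a nonzero rational multiple of $E_{2k}-1$. Hence $E_2,E_4,E_6$ lie in the image, which is a unital algebra. So $\QM(1;\bQ)=\bQ[E_2,E_4,E_6]$ is contained in the image.

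\textbf{Inclusion $\subseteq$.} It suffices to show $Z(f)\in\QM(1;\bQ)$ for each basis element $f=Q_{2k}$ of $R_2^+$, because $\QM(1;\bQ)$ is an algebra. But $Z(Q_{2k})=c_k(E_{2k}-1)$, and every Eisenstein series $E_{2k}$ with $k\ge1$ lies in $\QM(1;\bQ)$: $E_2$ is quasi-modular, and $E_{2k}$ for $k\ge2$ is modular. So the image is contained in $\QM(1;\bQ)$.

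Two points need checking. The first is the decomposition $R_2=\bop_{k\ge0}\bQ Q_{2k}$ stated above, i.e.\ that the even $Q_{2k}$ span exactly the polynomials in $Q_2$. I would verify it using $\tau$-invariance: $\tau(Q_k)=(-1)^kQ_k$ for $k\ge1$, the $Q_k$ form a basis of $R_1$, and $R_2=R_1^\tau$. The second is the identification of the image of $\TS$ with the algebra generated by length-one tensors, which is exactly \cref{SQS:th}. That theorem is the main obstacle: the symmetric tensors are generated under quasi-shuffle by $A$, in analogy with power sums generating symmetric functions over $\bQ$, via Newton-type identities. Granting it as stated, the theorem is essentially immediate.
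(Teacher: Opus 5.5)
Your proposal is correct and is essentially the paper's proof: reduce via \cref{SQS:th} (equivalently \cref{Z of TS}) to length-one values, use $R_2^+=\bigoplus_{k\ge1}\bQ Q_{2k}$, and note that $Z(Q_{2k})$ is a nonzero rational multiple of $E_{2k}-1$, which gives both inclusions. One minor slip is in your side check: in fact $\tau(Q_1)=\frac1{q-1}=-1-Q_1$, so $\tau(Q_k)=(-1)^kQ_k$ holds only for $k\ne1$ (via $D\tau=-\tau D$ and $D(1)=0$), but this still forces the $Q_1$-coefficient of a $\tau$-invariant element to vanish, so your conclusion $R_1^\tau=\bigoplus_{k\ge0}\bQ Q_{2k}$ is unaffected.
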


In particular, the series $Z(Q_{2r}^{\ts k})$ is quasi-modular for all $k,r\ge1$.
More generally, for every sequence
$\bff=(f_i)_{i\ge1}$ in $R^+_2$
and for every partition $\la$,
the symmetrization $Z^\bff_\la$ is quasi-modular.
For example,
\[Z(Q_2\ts Q_4)+Z(Q_4\ts Q_2)\]
is quasi-modular.

If we also allow quasi-modular forms of finite level (as opposed to level one considered above),
then we can prove the following.
Consider the $\la$-ring closures
\[
\bbR_2=\bQ\sbr{\psi^n(Q_2)\col n\ge1}
\sbs
\bbR=\bQ[q,(q^n-1)\i\col n\ge1]\sbs\QQ.\]
We will prove in \cref{R2 closure} that $(\bbR)^\ta=\bbR_2$.

\begin{theorem}\label{thm2}
The image of the map
\[Z:\TS(\bbR_2^+)\to\QQ\]
is contained in $\QM(\infty;\bQ)$,
the algebra of (rational) finite level quasi-modular forms.
\end{theorem}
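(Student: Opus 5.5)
By \cref{SQS:th}, $\TS(A)$ is the subalgebra of $\QS(A)$ generated by $A$ (as tensors of length one). Since $Z$ is an algebra morphism, $Z(\TS(\bbR_2^+))$ is the subalgebra of $\QQ$ generated by the single-letter values
\[
Z(f)=\sum_{n\ge1}f(q^n),\qquad f\in\bbR_2^+ .
\]
The algebra $\QM(\infty;\bQ)$ is closed under products. It therefore suffices to show that $Z(f)\in\QM(\infty;\bQ)$ for each $f\in\bbR_2^+$. This removes all the quasi-shuffle combinatorics, which is exactly what the symmetric tensors buy us.

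\textbf{Partial fractions.} By \cref{R2 closure}, $\bbR_2=(\bbR)^\ta$. So $f\in\bbR_2^+$ is a rational function with the following properties.
\begin{itemize}
\item Its poles are only at roots of unity.
\item It satisfies $f(0)=0$ and $f(q\i)=f(q)$, hence $f(\infty)=f(0)=0$.
\end{itemize}
Over $\bQ(\mu_N)$, with $N$ a common order of the poles, I would expand $f$ in the basis
\[
g_{\zeta,j}=D^{j-1}\frac{\zeta q}{1-\zeta q}=\sum_{m\ge1}m^{j-1}\zeta^m q^m,\qquad \zeta\in\mu_N,\ j\ge1 .
\]
These functions span the rational functions vanishing at $0$ with poles in $\mu_N$ and prescribed behaviour at $\infty$, which is why they form the right basis. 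Writing $f=\sum c_{\zeta,j}\,g_{\zeta,j}$, we get
\[
Z(f)=\sum_{j}\sum_{n,m\ge1}m^{j-1}\chi_j(m)\,q^{nm},\qquad \chi_j(m)=\sum_\zeta c_{\zeta,j}\zeta^m ,
\]
where each $\chi_j$ is an $N$-periodic function $\bZ\to\bC$.

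\textbf{Parity from $\ta$-invariance.} Next I would compute $\ta(g_{\zeta,j})$. We have $\ta D=-D\ta$ and
\[
\ta\rbr{\tfrac{\zeta q}{1-\zeta q}}=-1-\tfrac{\zeta\i q}{1-\zeta\i q}.
\]
Together these give $\ta(g_{\zeta,j})=(-1)^j g_{\zeta\i,j}$ plus a constant, and the constant occurs only for $j=1$. Comparing coefficients in $\ta f=f$, the constants must cancel because $f(0)=f(\infty)=0$. This forces $c_{\zeta\i,j}=(-1)^jc_{\zeta,j}$, i.e.
\[
\chi_j(-m)=(-1)^j\chi_j(m).
\]
So each summand is $\sum_{n,m} m^{j-1}\chi_j(m)q^{nm}$ with $\chi_j$ having the parity $(-1)^j$. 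Up to the constant term, this is the standard Eisenstein series of weight $j$ on $\Gamma_1(N)$ attached to the periodic function $\chi_j$, in the parity-compatible case. Such a series is modular for $j\ge3$, and also for $j=1$ in the odd case. For $j=2$ it is quasi-modular, namely a multiple of $E_2$ plus a modular form of level $N$.

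\textbf{Rationality and the expected obstacle.} Since $f$ has rational coefficients, $\mathrm{Gal}(\bQ(\mu_N)/\bQ)$ permutes the $c_{\zeta,j}$ compatibly. Hence the resulting $q$-series lie in $\QM(\infty;\bQ)$, and constant terms can be adjusted by rational scalars. The step needing the most care is the weight-one and weight-two cases. There I must check that the $\ta$-parity condition is precisely the one under which the Eisenstein series $\sum m^{j-1}\chi(m)q^{nm}$ is genuinely (quasi-)modular; in weight $2$ the non-modular part should be a multiple of $E_2$. I would verify this by splitting $\chi_j$ into additive characters $m\mapsto\zeta^m$ paired with $\zeta\i$, and quoting the classical transformation laws of Hecke's weight-one and weight-two Eisenstein series.
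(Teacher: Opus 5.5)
Your argument is correct. After the common reduction to single letters (the paper's \cref{Z of TS}), it takes a different route from the paper. The paper decomposes $f\in\bbR_2^+$ over $\bQ$ into cyclotomic partial fractions $Q_{n,k}/\Phi_n^k$. It makes each term $\ta$-invariant by averaging with its $\ta$-image and subtracting its value at $0$. It then quotes Shintani's theorem that $Z(Q/\Phi_n^k)\in\QM(n;\bQ)$ for such terms.

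You instead expand over $\bQ(\mu_N)$ in the functions $D^{j-1}\frac{\zeta q}{1-\zeta q}$. There $\ta$-invariance becomes the parity condition $c_{\zeta^{-1},j}=(-1)^jc_{\zeta,j}$; your computation is right, including the cancellation of the $j=1$ constants via $f(\infty)=0$. You then identify each weight-$j$ piece, up to its constant term, with a combination of the classical level-$N$ Eisenstein series $G_j^{v}$ with $v=(0,d)$, which are $\Gamma_1(N)$-invariant. The weight-one and weight-two inputs you flag are standard via Hecke's trick: $G_1^v$ is modular and $G_2^v-G_2/N^2$ is modular (e.g.\ Diamond--Shurman, Ch.~4). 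In effect you reprove the special case of Shintani's theorem that the paper needs.

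Your route is self-contained modulo textbook facts and more informative. It shows that $Z(\bbR_2^+)$ consists of combinations of level-$N$ Eisenstein series and $E_2$ with constant terms removed, which bears on the paper's question of how large the image is. The paper's route stays over $\bQ$ and is shorter, at the price of a black box.

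Two of your steps can be dropped. The Galois argument is unnecessary, because $\QM(N;\bQ)$ is defined as $\QM(N)\cap\QQ$ and $Z(f)\in\QQ$ automatically. The constant terms also need no adjustment, because constants are weight-$0$ elements of the graded algebra $\QM(N)$.
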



An interesting question is how far the image of the above map is
from $\QM(\infty;\bQ)$.
To prove \cref{thm2}, we will show that $Z(\bbR_2^+)$ consists of
quasi-modular forms.
This is done by applying a recent result of
Shintani \cite{shintani_generalization}.

Intuitively, the relation between \cref{thm1} and \cref{thm2}
is the following.
It is known that if $g\in\QM(N)$, then $\psi^d(g)=g(q^d)\in\QM(dN)$.
Therefore, if $Z(f)\in\QM(1)$ for some $f\in B^+$,
then $Z(\psi^d(f))\in\QM(d)$.
In particular, $Z(\psi^d(f))\in\QM(\infty)$ for all $f\in R^+_2$ by \cref{thm1}.
The elements $\psi^d(f)$, for $f\in R_2=R^\ta$, generate the algebra $(\bbR)^\ta$ (see \cref{R2 closure}),
so one may expect that $Z(f)\in\QM(\infty)$ for all $f\in\bbR^\ta\cap B^+$.
This is the statement of \cref{thm2}.


The paper is organized as follows.
In \cref{sec:qs}, we develop the algebraic theory of symmetric
quasi-shuffles and derive explicit symmetrization formulas.
In \cref{sec:qm}, we study the model algebras relevant to multiple $q$-zeta values and apply the algebraic results to quasi-modular forms of level one and of finite level.
\section{Quasi-shuffle algebras}
\label{sec:qs}

This section develops the algebraic ingredients of the paper.
After recalling quasi-shuffle and shuffle algebras, we study the
symmetric tensors inside a quasi-shuffle algebra.
The main result is that they form precisely the subalgebra generated
by the underlying algebra. We then give explicit symmetrization
formulas and relate the construction to symmetric and quasisymmetric
functions.

\subsection{Quasi-shuffle algebras}
We begin by recalling the quasi-shuffle construction and the
universal property that will be used later.
For more details on quasi-shuffle algebras
see~\eg~\cite{mozgovoy_multiple}.
Let $(A,\circ)$ be a (non-unital) algebra over a field $K$.
The \idef{quasi-shuffle algebra}
\[\QS(A)=\QS(A,\circ)=\bop_{n\ge0}A^{\ts n}\]
has the \idef{quasi-shuffle product} $*$ defined inductively by
$1_K*u=u*1_K=u$ and
\eq{
(au)*(bv)=a(u*bv)+b(au*v)+(a\circ b)(u*v),\qquad
a,b\in A,\, u,v\in\QS(A),
}
where $uv=u\ts v$ for $u,v\in\QS(A)$ is the concatenation product.
For example,
\[a*b=ab+ba+a\circ b,\qquad
a*bc=abc+bac+bca+(a\circ b)c+b(a\circ c),\qquad
a,b,c\in A.\]
Equivalently, this product can be described as follows.
Define the set of \idef{quasi-shuffles}
\eq{\qsh(m,n)=\bigsqcup_{r\ge0}\qsh(m,n;r),}
where $\qsh(m,n;r)$ consists of surjective maps $\si:[m+n]\to[r]$
satisfying
\[\si(1)<\dots<\si(m),\qquad
\si(m+1)<\dots<\si(m+n).\]
The elements of $\sh(m,n)=\qsh(m,n;m+n)$ are called \idef{shuffles}
and can be identified with the elements of $S_{m+n}/(S_m\xx S_n)$.
For $u=a_1\ts\dots\ts a_m$ and $v=b_1\ts\dots\ts b_n$
we have
\eq{u*v=\sum_{\si\in\qsh(m,n)}\si(u\ts v),}
where $\si(u\ts v)$ for $\si\in\qsh(m,n;r)$
is given by $c_1\ts\dots\ts c_r$ with
\[c_k=\begin{cases}
a_i&k=\si(i)\notin\si(m+[n]),\\
b_j&k=\si(m+j)\notin\si([m]),\\
a_i\circ b_j& k=\si(i)=\si(m+j).
\end{cases}\]

The algebra $\QS(A)$ is commutative if $A$ is commutative.
The algebra $\QS(A)$ is a connected (or conilpotent) Hopf algebra
with the coproduct given by deconcatenation
\[\de(a_1\ts\dots\ts a_n)
=\sum_{i=0}^n (a_1\ts\dots\ts a_i)\ts(a_{i+1}\ts\dots\ts a_n).
\]
Let $\Hopf_\con$ be the category of connected Hopf algebras and let $\Alg_\nun$ be the category of non-unital associative algebras over the field $K$.

\begin{theorem}[See {\cite{newman_cofree}}]
\label{adj}
The functor $\QS:\Alg_\nun\to\Hopf_\con$ is right adjoint to the augmentation-ideal functor
\[\Hopf_\con\to\Alg_\nun,\qquad
(H,\mu,\eta,\de,\eps)\mto\bar H=\Ker(\eps),
\]
where $\eps:H\to K$ is the counit of $H$.
\end{theorem}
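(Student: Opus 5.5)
We must prove the adjunction: for a connected Hopf algebra $H$ and a non-unital algebra $A$ there is a natural bijection
\[\Hom_{\Hopf_\con}(H,\QS(A))\iso\Hom_{\Alg_\nun}(\bar H,A).\]
The plan is to use the fact that $\QS(A)$, as a coalgebra, is the cofree conilpotent coalgebra $T^c(A)$ on $A$ with deconcatenation coproduct. We then show that the compatibility with products on both sides corresponds exactly to $\circ$ on $A$.

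\emph{Step 1 (forward map).} Given a Hopf morphism $F:H\to\QS(A)$, set $f=\pi_1\circ F|_{\bar H}$, where $\pi_1:\QS(A)\to A$ is the projection onto tensor degree one. We check that $\pi_1$ restricted to the augmentation ideal $\bigoplus_{n\ge1}A^{\ts n}$ is an algebra map to $(A,\circ)$. This follows directly from the quasi-shuffle formula: for $u\in A^{\ts m}$ and $v\in A^{\ts n}$ with $m,n\ge1$, the only quasi-shuffles with $r=1$ occur when $m=n=1$, giving $\pi_1(a*b)=a\circ b$. If $m+n\ge3$, every term of $u*v$ has length at least $2$, so $\pi_1(u*v)=0=\pi_1(u)\circ\pi_1(v)$ whenever $m$ or $n$ exceeds $1$. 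Since $F$ preserves the counit, it maps $\bar H$ into the augmentation ideal, so $f$ is a morphism of non-unital algebras.

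\emph{Step 2 (backward map).} Given an algebra map $f:\bar H\to A$, extend it by zero on $K\cdot1$. Define
\[F(h)=\eps(h)1+\sum_{n\ge1}f^{\ts n}\bar\de^{\,n-1}(h),\]
where $\bar\de$ is the reduced coproduct. Conilpotency of $H$ makes the sum finite. This is the standard construction showing that $F$ is the unique coalgebra morphism with $\pi_1F=f$, by cofreeness of $T^c(A)$ among conilpotent coalgebras. The cofreeness itself I would verify by directly comparing iterated coproducts. Uniqueness in the coalgebra category gives injectivity of $F\mapsto\pi_1F$, and naturality in $H$ and $A$ is immediate.

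\emph{Step 3 (main obstacle: $F$ is multiplicative).} We must show that $F(xy)=F(x)*F(y)$. Both $(x,y)\mapsto F(xy)$ and $(x,y)\mapsto F(x)*F(y)$ are coalgebra maps $H\ts H\to\QS(A)$, since $\QS(A)$ is a bialgebra and $H\ts H$ is a connected coalgebra. By uniqueness in the cofree property, it suffices to compare their compositions with $\pi_1$.

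First, $\pi_1F(xy)=f(\overline{xy})$. Second, using Step 1 and the counit splitting $F(x)=\eps(x)1+\bar F(x)$,
\[\pi_1(F(x)*F(y))=\eps(x)f(y)+f(x)\eps(y)+f(x)\circ f(y)\]
for the reduced parts. Writing $x=\eps(x)1+\bar x$, we have $f(xy)=\eps(x)f(\bar y)+\eps(y)f(\bar x)+f(\bar x\bar y)$. This agrees with the expression above because $f$ is multiplicative on $\bar H$. Unit preservation is clear, and the antipode is automatic for bialgebra maps between Hopf algebras. This completes the bijection.
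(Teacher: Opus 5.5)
The paper gives no proof of its own here and simply cites \cite{newman_cofree}. Your argument is the standard cofree-coalgebra proof behind that reference, and it is correct: you obtain $F$ from $f$ via cofreeness of $T^c(A)$, and you prove $F(xy)=F(x)*F(y)$ by comparing two coalgebra maps $H\otimes H\to\QS(A)$ after projecting with $\pi_1$, which you correctly show is multiplicative on the augmentation ideal. The only facts you take as known are the cofreeness of $T^c(A)$ among conilpotent coalgebras and the bialgebra structure of $\QS(A)$ under deconcatenation. Both are standard, and the paper assumes them as well.
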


\subsection{Shuffle algebras}
The shuffle algebra is the special case of the quasi-shuffle
construction in which the multiplication on the underlying vector
space is zero.
We recall it because symmetric tensors are particularly transparent in this case and provide the model for the construction in the next subsection.
For a vector space $V$,
define the \idef{shuffle algebra}
\[\Sh(V)=\QS(V,0),\]
where~$V$ is equipped with the zero product.
The product in $\Sh(V)$, called the \idef{shuffle product} and denoted by $\sha$, is given by
\eq{u\sha v
=\sum_{\si\in\sh(m,n)}\si(u\ts v),\qquad
u\in V^{\ts m},\, v\in V^{\ts n}.
}
Define the \idef{symmetric shuffle algebra}
or the \idef{algebra of symmetric tensors}
(\cf \cite[IV.5.3]{bourbaki_algebra4})
\[\TS(V)=\bop_{n\ge0}\TS^n(V)
=\bop_{n\ge0}(V^{\ts n})^{S_n}.
\]
It is a Hopf subalgebra of the shuffle algebra $\Sh(V)$.
The shuffle product in $\TS(V)$ can be written in the form
\eq{u\sha v=\sum_{[\si]\in S_{m+n}/(S_m\xx S_n)}\si(u\ts v),\qquad
u\in \TS^m(V),\, v\in \TS^n(V).
}
Define the \idef{symmetrization map}
\eq{\SS:V^{\ts n}\to \TS^n(V),\qquad
a_1\ts \dots \ts a_n\mto \sum_{\si\in S_n}\si(a_1\ts \dots \ts a_n).
}
By abuse of notation, for $a=(a_1,\dots,a_n)\in V^n$,
we write $\SS(a)=\SS(a_1,\dots,a_n)=\SS(a_1\ts \dots \ts a_n)$
and call it the \idef{symmetrization} of $a$.
We have
\begin{equation}\label{sha1}
a_1\sha\dots\sha a_n=\SS(a_1,\dots,a_n).
\end{equation}

\subsection{Symmetric quasi-shuffle algebras}
We now return to an arbitrary algebra $A$ and consider the symmetric
tensors $\TS(A)\sbs\QS(A)$.
Our main algebraic observation is that, despite the additional
contraction terms in the quasi-shuffle product, this space is still
a subalgebra and is generated by the degree-one part $A$.
This will reduce the quasi-modularity of symmetric $Z$-values to the
quasi-modularity of $Z$ on $A$.

Let $A$ be a (non-unital) algebra over a field $K$ of characteristic zero.
For $a\in A^n$ and a nonempty set $B=\set{i_1<\dots<i_k}\sbs[n]$,
let
\eq{\label{aB}
a_B=a_{i_1}\circ\dots\circ a_{i_k}\in A.}
Let $\Pi_n$ be the set of all set partitions of $[n]=\set{1,\dots,n}$ (identified with equivalence relations on $[n]$).
For $\pi=\set{B_1,\dots,B_k}\in\Pi_n$, let
\eq{\SS_\pi(a)=\SS(a_{B_1},\dots,a_{B_k})\in\TS^k(A).}

\begin{lemma}\label{qs-sh}
For $a\in A^n$, we have
\begin{equation}\label{qs-sh:eq}
a_1*\ldots*a_n
=\sum_{\pi\in\Pi_n}\SS_\pi(a).
\end{equation}
\end{lemma}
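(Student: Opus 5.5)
Since $a_1*\dots*a_n$ is an iterated product of length-one words, I will describe it combinatorially by iterating the description of $u*v$ through quasi-shuffles and then regrouping the terms by set partitions. I will first show that $a_1*\dots*a_n$ equals the sum over all surjections $f:[n]\to[k]$, $k\ge1$, of the word $c_1\ts\dots\ts c_k$ with $c_j=a_{f^{-1}(j)}$; here $a_B$ is the ordered product \cref{aB}, taken in increasing order of indices. This I prove by induction on $n$, the case $n=1$ being trivial. For the inductive step, write $u=a_1*\dots*a_{n-1}$ as such a sum over surjections $g:[n-1]\to[k]$ and multiply by $a_n$ on the right using $u*v=\sum_{\si\in\qsh(k,1)}\si(u\ts v)$. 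A quasi-shuffle $\si\in\qsh(k,1)$ either inserts $a_n$ as a new letter in one of the $k+1$ positions, or merges it with the $j$-th letter, producing $a_{g^{-1}(j)}\circ a_n=a_{g^{-1}(j)\cup\{n\}}$. The order is correct because $n$ is the largest index, which is where associativity of $\circ$ and the convention $i_1<\dots<i_k$ enter.

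Next I check that these choices biject with surjections $f:[n]\to[k']$. Given $f$, restricting it to $[n-1]$ and compressing the values to an initial segment recovers $g$. Whether $f^{-1}(f(n))=\{n\}$ then tells us whether $a_n$ was inserted as a new letter or merged. Moreover the position of insertion, respectively the letter merged with, is determined by $f(n)$. Conversely, every pair ($g$, choice) yields exactly one $f$. This settles the claimed formula.

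Finally I group surjections $f:[n]\to[k]$ by the set partition $\pi=\{f^{-1}(1),\dots,f^{-1}(k)\}\in\Pi_n$. For a fixed $\pi=\{B_1,\dots,B_k\}$, the surjections inducing $\pi$ are exactly the $k!$ bijective labelings of its blocks by $[k]$. The corresponding words are the $\si(a_{B_1}\ts\dots\ts a_{B_k})$ for $\si\in S_k$, each occurring once, so their sum is $\SS(a_{B_1},\dots,a_{B_k})=\SS_\pi(a)$. This gives \cref{qs-sh:eq}.

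The main obstacle is the bijection in the inductive step: each term of the quasi-shuffle expansion must be counted exactly once, with no multiplicities. Over the shuffle part, this amounts to saying that $\qsh(k,1)$ has $k+1$ shuffles and $k$ merges, in agreement with the extension count from $g$ to $f$. As a sanity check, when $\circ=0$ only the partition into singletons contributes, and the formula reduces to \cref{sha1}.
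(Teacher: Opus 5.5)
Your proof is correct and takes the paper's approach: the paper simply says the identity follows directly from the quasi-shuffle product formula. Your induction identifying $a_1*\dots*a_n$ with a sum over surjections $[n]\to[k]$, followed by grouping the $k!$ block labelings of each $\pi\in\Pi_n$ into $\SS_\pi(a)$, is a careful unpacking of that remark, and it correctly handles the ordering $a_{g^{-1}(j)}\circ a_n$ needed when $A$ is noncommutative.
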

\begin{proof}
This follows directly from the formula for the quasi-shuffle product.
\end{proof}

\begin{theorem}\label{SQS:th}
The subalgebra $\SQS(A)\sbs\QS(A)$ generated by $A\sbs\QS(A)$
is equal to
\[\TS(A)=\bop_{n\ge0}(A^{\ts n})^{S_n}\sbs \QS(A).\]
It is a Hopf subalgebra of $\QS(A)$.
\end{theorem}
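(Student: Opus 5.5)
Prove the two inclusions $\SQS(A)\sbs\TS(A)$ and $\TS(A)\sbs\SQS(A)$, the second by induction on tensor degree, and then read off the Hopf statement.

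For $\TS(A)\sbs\SQS(A)$, first note that $\TS^n(A)$ is spanned by the symmetrizations $\SS(a_1,\dots,a_n)$ with $a_i\in A$, since $\mathrm{char}\,K=0$ and we may average. We induct on $n$, the cases $n\le1$ being trivial. By \cref{qs-sh},
\[
\SS(a_1,\dots,a_n)=a_1*\dots*a_n-\sum_{\pi\in\Pi_n,\ \pi\ne\{\{1\},\dots,\{n\}\}}\SS_\pi(a).
\]
Each $\SS_\pi(a)$ in the sum is a symmetrization of $k<n$ elements of $A$, so it lies in $\SQS(A)$ by induction. Hence $\SS(a)\in\SQS(A)$.

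For $\SQS(A)\sbs\TS(A)$, it suffices to show that $\TS(A)$ is closed under $*$, since $\TS(A)$ contains $A$ and $1$. Every element of $\TS^m(A)$ is a linear combination of $\SS(a_1,\dots,a_m)$, so it suffices to show $\SS(a)*\SS(b)\in\TS(A)$ for $a\in A^m$, $b\in A^n$. Rather than working with quasi-shuffles directly, we use the elements produced above. By the previous paragraph and induction, every $\SS(a)$ equals $a_1*\dots*a_m$ plus a combination of $\SS_\pi(a)$ of lower degree. Each of those lower terms is in turn a combination of $*$-products of elements of $A$, because we already showed $\TS\sbs\SQS$ degreewise. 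So $\TS(A)$ is contained in the span $P$ of products $c_1*\dots*c_k$ with $c_i\in A$. Conversely, $P\sbs\TS(A)$: by \cref{qs-sh}, $c_1*\dots*c_k=\sum_\pi\SS_\pi(c)$, and each $\SS_\pi(c)$ is a symmetric tensor. Therefore $\TS(A)=P$, which is clearly closed under $*$, and $\TS(A)=P=\SQS(A)$.

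For the Hopf statement, it suffices to check that the deconcatenation coproduct $\de$ maps $\TS(A)$ into $\TS(A)\ts\TS(A)$; the antipode is automatic for a connected graded subbialgebra. This can be verified on generators, since $\de$ is an algebra morphism and $\SQS(A)$ is generated by $A$. Alternatively, one checks it directly: the $(i,n-i)$ component of $\de(\SS(a))$ is $\sum\SS(a_I)\ts\SS(a_{[n]\setminus I})$ over $|I|=i$, which is symmetric in each factor. Either way, $\de(a)=a\ts1+1\ts a$ for $a\in A$, so $\de(\SQS(A))\sbs\SQS(A)\ts\SQS(A)$.

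The main step is establishing $P\sbs\TS(A)$ correctly, i.e.\ that each $\SS_\pi(c)$ is genuinely an element of $(A^{\ts k})^{S_k}$. Since $\SS$ sums over all of $S_k$, this is immediate, so the argument reduces entirely to \cref{qs-sh} together with induction on degree.
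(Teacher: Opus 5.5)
Your proof is correct and follows the paper's: both inclusions come from \cref{qs-sh} (expanding products of generators into symmetrizations gives $\SQS(A)\sbs\TS(A)$, and induction on $n$ gives the converse), and the Hopf property is checked on the primitive generators $a\in A$. The detour through closure under $*$ is redundant, since your span $P$ is by definition $\SQS(A)$. As a small aside, the quasi-shuffle product respects only the length filtration, not the length grading, so the antipode comes from connectedness of the coalgebra rather than from a grading.
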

\begin{proof}
We have $\SQS(A)\sbs\TS(A)$ by \cref{qs-sh:eq} and the fact that
$\SS(a_1,\dots,a_n)$ for $a\in A^n$,
span $\TS(A)=\bop_{n\ge0}(A^{\ts n})^{S_n}$.

Let us show that $\SS(a_1,\dots,a_n)\in\SQS(A)$ by induction on $n$.
If $\pi\in\Pi_n$ has size $<n$, then $\SS_\pi(a)\in\SQS(A)$ by induction.
Otherwise, $\pi=\set{\set{1},\dots,\set n}$
and $\SS_\pi(a)=\SS(a_1,\dots,a_n)$.
Therefore  $\SS(a_1,\dots,a_n)\in\SQS(A)$
by \cref{qs-sh:eq}.
We conclude that $\SQS(A)=\TS(A)$.

The statement that $\SQS(A)$ is a Hopf subalgebra follows from the fact that $\de(a)=a\ts1+1\ts a\in \SQS(A)\ts \SQS(A)$ for $a\in A$.
\end{proof}

\begin{corollary}\label{Z of TS}
Let $Z:\QS(A)\to B$ be an algebra morphism such that $Z(A)\sbs C$, where $C\sbs B$ is a subalgebra.
Then $Z(\TS(A))\sbs C$.
\end{corollary}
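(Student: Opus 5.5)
The plan is to combine \cref{SQS:th} with the fact that $Z$ is an algebra morphism. By \cref{SQS:th}, $\TS(A)$ is exactly the subalgebra $\SQS(A)$ of $\QS(A)$ generated by $A$. As a vector space, it is therefore spanned by the unit $1_K\in A^{\ts0}$ together with all quasi-shuffle products $a_1*\dots*a_n$ with $n\ge1$ and $a_i\in A$.

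The argument then runs as follows. First, $Z$ is a morphism of unital algebras, so $Z(1_K)=1_B$. Since $C$ is a (unital) subalgebra of $B$, we get $Z(1_K)\in C$. Next, multiplicativity gives
\[Z(a_1*\dots*a_n)=Z(a_1)\cdots Z(a_n),\]
and each factor lies in $C$ by hypothesis. Because $C$ is closed under multiplication, the product lies in $C$. Finally, $Z$ is linear and $C$ is a subspace, so $Z$ maps the whole span, which is $\SQS(A)=\TS(A)$, into $C$.

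Equivalently, one can phrase this without spanning sets: $Z\i(C)$ is a subalgebra of $\QS(A)$ containing $A$, hence it contains $\SQS(A)=\TS(A)$.

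There is no real obstacle here; the substance is entirely in \cref{SQS:th}. The only point to check is the convention that $C$ contains the unit of $B$. If $C$ were only assumed to be a non-unital subalgebra, the degree-zero component would need separate treatment, and one would restrict the conclusion to $\bop_{n\ge1}\TS^n(A)$.
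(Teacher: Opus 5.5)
Your proof is correct and is the paper's argument: \cref{SQS:th} identifies $\TS(A)$ with the subalgebra $\SQS(A)$ generated by $A$, and an algebra morphism sends the subalgebra generated by $A$ into any subalgebra containing $Z(A)$. Your caveat about the unit is consistent with the paper's implicit convention that $C$ is a unital subalgebra and $Z$ preserves units.
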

\begin{proof}
Since $Z(A)\sbs C$ and $A$ generates $\SQS(A)=\TS(A)$,
we have $Z(\TS(A))\sbs C$.
\end{proof}

\begin{remark}
For a bialgebra $H$, its subspace of primitive elements
is defined by
\[P(H)=\sets{x\in H}{\de(x)=x\ts1+1\ts x}.\]
It is a Lie subalgebra of $H$,
and there is an algebra morphism $U(P(H))\to H$,
which is injective if the base field has characteristic zero
(we always assume this).
For $H=\QS(A)$ we have $P(H)=A$,
hence there is an algebra isomorphism
$U(A)\isoto\SQS(A)\sbs\QS(A)$,
where $A$ is regarded as a Lie algebra via the commutator bracket.
If $A$ is commutative,
this gives an algebra isomorphism
$S(A)\isoto\SQS(A)$.
This is not too surprising, since the quasi-shuffle algebra $\QS(A)$ is isomorphic to the shuffle algebra $\Sh(A)$ by
\cite{newman_cofree,hoffman_quasia}.
\end{remark}

\subsection{Symmetrization formulas}
The preceding theorem shows abstractly that every symmetrizer can be
expressed in terms of quasi-shuffle products.
We now make this expression explicit.
For an arbitrary associative algebra the formula is naturally indexed
by permutations; in the commutative case it reduces to a formula
indexed by set partitions, whose coefficients are the corresponding
M\"obius coefficients of the poset $\Pi_n$.

Consider the sign character
\[\sgn: S_n\to\set{\pm1},\qquad \si\mto(-1)^{n-c(\si)},\]
where $c(\si)$ is the number of cycles of $\si\in S_n$.
Every cycle of $\si$ can be written in the form $C=(i_1\dots i_r)$, where $i_1$ is the minimal element of $C$.
For $a\in A^n$, let
\eq{a_C=a_{i_1}\circ\dots \circ a_{i_r}\in A.}
We order the cycles $C_1,\dots,C_{c(\si)}$ of $\si$ by their minimal elements and define
\eq{P_\si(a)=a_{C_1}*\dots*a_{C_{c(\si)}}\in\SQS(A).}

\begin{theorem}[Symmetrization formula]
\label{sym formula1}
For $a\in A^n$, we have
\[
\SS(a_1,\dots,a_n)
=\sum_{\si\in S_n}\sgn(\si)P_\si(a).
\]
\end{theorem}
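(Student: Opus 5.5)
Since $A$ is only associative, I will keep track of orders throughout. The plan is to expand each $P_\si(a)$ using \cref{qs-sh}, exchange the order of summation, and show that every term other than $\SS(a_1,\dots,a_n)$ cancels.

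\emph{Step 1: expand $P_\si(a)$.} For $\si$ with cycles $C_1,\dots,C_c$, \cref{qs-sh} applied to $(a_{C_1},\dots,a_{C_c})\in A^c$ gives
\[P_\si(a)=\sum_{\rho\in\Pi_c}\SS\bigl((a_{C_j})_{j\in\rho}\bigr),\]
where each block of $\rho$ contributes the $\circ$-product of the corresponding $a_{C_j}$, taken in increasing order of $j$, i.e.\ of $\min C_j$. Merging cycles along $\rho$ gives a set partition $\pi\ge\pi(\si)$ of $[n]$, where $\pi(\si)$ is the orbit partition of $\si$. Each block $B$ of $\pi$ then carries a word $w_B(\si)$: the concatenation of the cycle words $(i_1\dots i_r)$ of the cycles of $\si$ contained in $B$, with cycles ordered by their minima. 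The term obtained is $\SS\bigl((a_{w_B(\si)})_{B\in\pi}\bigr)$. So the right-hand side equals
\[\sum_{\pi\in\Pi_n}\ \sum_{\si:\,\pi(\si)\le\pi}\sgn(\si)\,\SS\bigl((a_{w_B(\si)})_{B\in\pi}\bigr).\]

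\emph{Step 2: factor over blocks.} A permutation with $\pi(\si)\le\pi$ is the same as a tuple $(\si_B)_{B\in\pi}$ with $\si_B\in S_B$, and $\sgn$ is multiplicative over this decomposition. The symmetrizer $\SS$ is multilinear, so the inner sum factors as
\[\SS\Bigl(\bigl(\textstyle\sum_{\si_B\in S_B}\sgn(\si_B)\,a_{w_B(\si_B)}\bigr)_{B\in\pi}\Bigr).\]
It therefore suffices to prove the key identity: for a finite totally ordered set $B$ with $|B|=m\ge2$,
\[\sum_{\tau\in S_B}\sgn(\tau)\,a_{w(\tau)}=0\quad\text{in }A,\]
and for $m=1$ the sum is just $a_B$. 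Given this, only $\pi=\set{\set1,\dots,\set n}$ survives, and it contributes $\SS(a_1,\dots,a_n)$.

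\emph{Step 3 (main obstacle): the key identity.} Writing $\tau$ as its cycles in canonical form, ordered by minima, and then concatenating them is Foata's fundamental transformation. It gives a bijection from $S_m$ onto the words, i.e.\ permutations of $B$ read as sequences, that begin with $\min B$. Under this bijection the number of cycles of $\tau$ equals the number of left-to-right minima of the word $w(\tau)$. Each word $w$ starting with the minimum therefore occurs exactly once in the sum, with sign $(-1)^{m-\mathrm{lrmin}(w)}$.

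The word bijection alone does not give cancellation, since distinct words give distinct monomials in a general associative algebra. So I would re-examine what Step 1 actually produces. The contraction $a_{C_j}\circ a_{C_{j'}}$ arises only inside a quasi-shuffle, and the same block word can arise from several pairs $(\si,\rho)$. The sum should therefore be organized by the block word $w$, namely the sequence $a_{w_1}\circ\cdots\circ a_{w_m}$. For a fixed word $w$, the pairs producing it are exactly the ways of cutting $w$ into consecutive segments, each segment beginning with its own minimum, such that the segment minima increase. These cuttings are exactly choices of a subset of the left-to-right minima of $w$, always including $w_1$. Summing $(-1)^{m-\#\text{segments}}$ over these subsets gives $(-1)^{m-1}(1-1)^{\ell-1}$, where $\ell$ is the number of left-to-right minima of $w$. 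This vanishes unless $\ell=1$, i.e.\ unless $w$ is the single cycle starting at the minimum.

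This shows that my Step 2 bookkeeping was too coarse. The correct invariant is the block word, and the merging order is forced by the minima. The final check is that the surviving single-cycle terms, together with the singleton partition, give exactly $\SS(a_1,\dots,a_n)$ with coefficient one. I expect getting this combinatorial accounting exactly right, including signs and orders in the noncommutative setting, to be the main difficulty.
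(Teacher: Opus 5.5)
Your Steps 1 and 2 are sound. Expanding each $P_\si(a)$ by \cref{qs-sh}, with blocks multiplied in increasing order of cycle minima, and regrouping by the merged partition $\pi\ge\pi(\si)$ is correct. Using multilinearity of $\SS$ then reduces the theorem to the identity $\sum_{\ta\in S_B}\sgn(\ta)\,a_{w(\ta)}=0$ for $\n{B}=m\ge2$, and that identity is true. The gap is that Step 3, which carries the whole content, is never carried out correctly.

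\emph{The bijection claim is false.} The map $\ta\mapsto w(\ta)$ is not a bijection onto words beginning with $\min B$: there are $m!$ permutations but only $(m-1)!$ such words. For $B=\set{1,2,3}$ the word $123$ comes from $\mathrm{id},(12),(23),(123)$. Foata's bijection instead orders cycles by \emph{decreasing} minima, so that cycle starts become left-to-right minima.

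\emph{The cut positions are counted with the wrong statistic.} Regrouping by words is the right move, but the admissible cut positions are not the left-to-right minima. Since $w_1=\min B$, $w_1$ is the only left-to-right minimum, so your count gives exactly one cutting per word and no cancellation at all. A cut position $p$ must satisfy $w_p<w_q$ for all $q>p$, because a segment start lies below its own segment and below every later segment start. So the cuttings correspond to subsets of the right-to-left minima of $w$ that contain position $1$.

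\emph{The expected endgame cannot work.} You expect ``single-cycle'' terms to survive and combine with the discrete partition to give $\SS(a_1,\dots,a_n)$. But a term coming from $\pi$ with a block of size $\ge2$ lies in $A^{\ts\n{\pi}}$ with $\n{\pi}<n$. It cannot contribute to $\SS(a_1,\dots,a_n)\in A^{\ts n}$, so for the theorem to hold all such terms must cancel. Step 2 was not too coarse; only the bijection claim was wrong.

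\emph{The repair.} With the correct statistic the argument closes at once. Positions $1$ and $m$ are always right-to-left minima, so $\ell\ge2$ whenever $m\ge2$. Hence the coefficient $(-1)^{m-1}(1-1)^{\ell-1}$ of $a_{w_1}\circ\dots\circ a_{w_m}$ vanishes for every word $w$. Equivalently, let $b$ be the last letter of $w(\ta)$ and toggle the cut before it: split $b$ off the last cycle as a fixed point, or append a final fixed point $(b)$ to the preceding cycle. This is a sign-reversing involution on $S_B$ that preserves $w(\ta)$.

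\emph{Comparison with the paper.} Once repaired, your proof is a valid direct alternative: it is the noncommutative analogue of the M\"obius-inversion proof of \cref{sym formula}, and it exhibits the formula as an exact cancellation. The paper instead argues by induction on $n$. It shows that the right-hand side satisfies the same recursion as $\SS$ obtained from $\SS(a_1,\dots,a_{n-1})*a_n$. It does this by splitting $\si$ according to whether $n$ is a fixed point, and otherwise deleting $n$ from its cycle, which merges $a_n$ into its predecessor $a_i$. That route is shorter and avoids the word combinatorics entirely.
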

\begin{proof}
Let $F_n(a)$ be the right-hand side of the equation.
The quasi-shuffle product with the one-letter word $a_n$ gives
\eq{\label{S-ind}
\SS(a_1,\dots,a_{n-1})*a_n
=\SS(a_1,\dots,a_n)
+\sum_{i=1}^{n-1}
\SS(a_1,\dots,a_{i-1},a_i\circ a_n,a_{i+1},\dots,a_{n-1}).
}
We will show that $F_n$ satisfies the same recursion. Split the permutations
$\sigma\in S_n$ according to whether $n$ is a singleton cycle.
In the first case, deleting the cycle $(n)$ gives $\tau\in S_{n-1}$,
and the corresponding contribution to $F_n(a)$ is
$F_{n-1}(a_1,\dots,a_{n-1})*a_n$.

In the second case, let $i$ be the element immediately preceding $n$
in its cycle.
Deleting $n$ from the cycle gives $\tau\in S_{n-1}$
with $c(\si)=c(\ta)$ and $\sgn\si=-\sgn \ta$
such that
\[
P_\si(a)=P_\tau(a_1,\dots,a_{i-1},a_i\circ a_n,a_{i+1},\dots,a_{n-1}).
\]
This defines a bijection between such permutations $\si$ and pairs
$(\tau,i)\in S_{n-1}\times[n-1]$.
Hence
\[
F_n(a)
=F_{n-1}(a_1,\dots,a_{n-1})*a_n-\sum_{i=1}^{n-1}
F_{n-1}(a_1,\dots,a_{i-1},a_i\circ a_n,a_{i+1},\dots,a_{n-1}).
\]
Comparison with \cref{S-ind},
together with $F_1(a_1)=a_1$, proves the
claim by induction.
\end{proof}

\begin{example}
We have
\begin{gather*}
\SS(a,b)=a*b-a\circ b,\\
\SS(a,b,c)
=a*b*c-(a\circ b)*c-(a\circ c)*b
-a*(b\circ c)+a\circ b\circ c+a\circ c\circ b.
\end{gather*}
\end{example}

An alternative formula for symmetrizers can be proved
if $A$ is commutative
(\cf \cite{hoffman_multiple,hoffman_quasib}).
For $\pi=\set{B_1,\dots,B_k}\in\Pi_n$ and $a\in A^n$, let
(see \cref{aB} for the definition of $a_B$)
\eq{P_\pi(a)=a_{B_1}*\dots*a_{B_k}\in \SQS(A).}

\begin{theorem}
\label{sym formula}
Let $A$ be a commutative algebra and let $a\in A^n$. Then
\[
\SS(a_1,\dots,a_n)
=\sum_{\pi\in\Pi_n}
\mu(\pi)P_\pi(a),
\]
where
\[\mu(\pi)=(-1)^{n-\n{\pi}}\prod_{B\in\pi}(\n{B}-1)!,\qquad \pi\in\Pi_n.\]
\end{theorem}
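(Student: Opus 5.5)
The plan is to deduce this from \cref{sym formula1} by grouping permutations according to the set partition formed by their cycles. Each $\si\in S_n$ determines the set partition $\pi(\si)\in\Pi_n$ whose blocks are the supports of its cycles. Since $A$ is commutative, $a_C=a_{i_1}\circ\dots\circ a_{i_r}$ depends only on the underlying set of the cycle $C$, so $a_C=a_B$ where $B$ is that set. Moreover, $\SQS(A)\sbs\QS(A)$ is commutative, so the order of the factors in $P_\si(a)$ does not matter. Hence $P_\si(a)=P_{\pi(\si)}(a)$ for every $\si$.

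Regrouping the sum in \cref{sym formula1} according to $\pi=\pi(\si)$ then gives
\[
\SS(a_1,\dots,a_n)=\sum_{\pi\in\Pi_n}\Big(\sum_{\si:\pi(\si)=\pi}\sgn(\si)\Big)P_\pi(a).
\]
All permutations with $\pi(\si)=\pi$ have exactly $\n\pi$ cycles, so $\sgn(\si)=(-1)^{n-\n\pi}$ for each of them. The number of such permutations is the number of ways to put a cyclic order on each block, namely $\prod_{B\in\pi}(\n B-1)!$. The inner sum is therefore $\mu(\pi)$, which proves the formula.

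The argument has no real obstacle. The only points to check are the two commutativity uses: that $A$ commutative makes $\QS(A)$ commutative, which is stated in the paper, and that the product $a_C$ is independent of the cyclic order. Both are immediate.
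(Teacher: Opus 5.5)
Your proposal is correct and is essentially the paper's first proof: group the permutations in \cref{sym formula1} by the set partition given by their cycle supports, use $c(\si)=\n\pi$ to get the common sign $(-1)^{n-\n\pi}$, and count $\prod_{B\in\pi}(\n B-1)!$ cyclic orderings. You also make explicit the two places where commutativity enters ($a_C=a_B$, and the order of the factors in $P_\si(a)$), which the paper leaves implicit.
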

\begin{proof}
A permutation $\si\in S_n$ is equivalently a set partition $\pi\in\Pi_n$ together with a cyclic ordering on each block of $\pi$.
We have $c(\si)=\n\pi$, hence $\sgn(\si)=(-1)^{n-\n\pi}$.
For a block $B\sbs[n]$, there are $(\n B-1)!$ cyclic orders.
Now we apply \cref{sym formula1}.
\end{proof}

Let us give an alternative proof of \cref{sym formula},
based on the M\"obius inversion of~\cref{qs-sh:eq}.

\begin{proof}[Second proof of \cref{sym formula}]
Set partitions of $[n]$ can be identified with equivalence relations on~$[n]$.
Define the partial order on $\Pi_n$
with $\pi\le\rho$ if $i\pi j$ implies $i\rho j$.
The least element of $\Pi_n$ is the discrete partition $\hat0=\set{\set1,\dots,\set n}$.
Expanding the quasi-shuffle product in $P_\pi(a)$, one may either keep two
blocks of $\pi$ separate or merge them using multiplication in~$A$.
Therefore
\eq{P_\pi(a)=\sum_{\rho\ge\pi}\SS_\rho(a).}
This formula is analogous to \cref{qs-sh:eq}.
We will apply the M\"obius inversion formula to this equation.
The M\"obius function $\mu:\Pi_n\xx\Pi_n\to\bZ$ of the poset $\Pi_n$
is defined by $\sum_{\pi:\si\le\pi\le\rho}\mu(\si,\pi)=\de_{\si,\rho}$.
Then
\[\sum_\pi\mu(\hat0,\pi)P_\pi(a)
=\sum_{\pi,\rho:\hat0\le\pi\le\rho}\mu(\hat0,\pi)\SS_\rho(a)
=\sum_\rho\de_{\hat0,\rho}\SS_\rho(a)=\SS_{\hat0}(a)=\SS(a_1,\dots,a_n).
\]
We have $\mu(\hat0,\pi)=\mu(\pi)$
(see \eg \cite[3.10.4]{stanley_enumerativea}).
\end{proof}

\subsection{Symmetric and quasisymmetric functions}
There is also a useful interpretation of symmetric quasi-shuffles in
terms of the Hopf algebras of symmetric and quasisymmetric functions.
Although this description is not needed for the quasi-modularity
results below, it gives a conceptual source for identities among
symmetrizers and allows standard identities for symmetric functions
to be transferred directly to quasi-shuffle algebras.

For $\al\in\bN^k$, let $\n\al=\sum_{i=1}^k\al_i$ and let
$x^\al=\prod_{i=1}^k x_i^{\al_i}$.
Define the graded algebra of bounded degree power series
\[\bQ\pser{x_1,x_2,\dots}_\bdd
=\bop_{d\ge0}\prod_{\n\al=d}\bQ x^\al
\sbs \bQ\pser{x_1,x_2,\dots}.\]
The graded subalgebra $\QSym\sbs \bQ\pser{x_1,x_2,\dots}_\bdd$
of \idef{quasisymmetric functions}
consists of power series $f\in \bQ\pser{x_1,x_2,\dots}_\bdd$
such that for all $\al\in\pN^k$
and all $\bn\in\pN^k$ with $n_1<\dots<n_k$,
the coefficients of the monomials $x^\al$ and
$x_{n_1}^{\al_1}\dots x_{n_k}^{\al_k}$ in $f$ coincide.
This algebra has the basis
\eq{M_\al
=\sum_{0<n_1<\dots<n_k}x_{n_1}^{\al_1}\dots x_{n_k}^{\al_k},
\qquad \al\in\pN^k,
}
with $M_{()}=1$, where $\pN^0=\set{()}$.
The graded algebra
\[\La=\bQ\pser{x_1,x_2,\dots}_\bdd^{S_\infty}\]
of symmetric functions is contained in $\QSym$.
Indeed, $\La$ has the basis consisting of monomial symmetric functions $m_\la$ for partitions $\la$ (see \eg \cite{macdonald_symmetric}).
For $\la$ of length $k$, we have
\[m_\la=\sum_{\al^+=\la}M_\al,\]
where $\al^+$ is the partition obtained by rearranging the entries of $\al\in\pN^k$.

The algebra $\QSym$ is a connected Hopf algebra with the coproduct
(\cf \cite{malvenuto_duality})
\[\de(M_\al)=\sum_{i=0}^k M_{(\al_1,\dots,\al_i)}\ts
M_{(\al_{i+1},\dots, \al_k)},\qquad \al\in\pN^k.\]
For example, $p_n=m_{(n)}=\sum_{i\ge1}x_i^n\in\La$ satisfies $\de(p_n)=p_n\ts1+1\ts p_n$,
hence $\La\sbs\QSym$ is a Hopf subalgebra
(\cf \cite[Example I.5.25]{macdonald_symmetric}).

Let $A$ be a (non-unital) algebra and let
$a=(a_1,\dots,a_m)\in A^m$ be a collection of pairwise commuting elements.
Let $\ol\QSym\sbs\QSym$ be the augmentation ideal.
Then the evaluation map
\eq{\phi:\ol\QSym\to A,\qquad
f\mto f(a_1,\dots,a_m,0,\dots),
}
is an algebra morphism.
By \cref{adj}, it induces a Hopf algebra morphism
\eq{\Phi:\QSym\to \QS(A).}
Explicitly, for $\al\in\pN^k$,
\eq{
\Phi(M_\al)
=
\sum_{0<i_1<\dots<i_r=k}
\phi(M_{\al_{1},\dots,\al_{i_1}})
\ts
\phi(M_{\al_{i_1+1},\dots,\al_{i_2}})
\ts\dots\ts
\phi(M_{\al_{i_{r-1}+1},\dots,\al_{k}}).
}
In particular, for $p_n=m_{(n)}=M_{(n)}$,
we obtain
\eq{\Phi(p_n)=\phi(p_n)=\sum_{i=1}^ma_i^n\in A.}
For $e_n=m_{(1^n)}=M_{(1^n)}$, we obtain
\eq{\Phi(e_n)=\sum_{\ov{k_1+\dots+k_r=n}{r\ge1,k_i\ge1}}e_{k_1}(a_1,a_2,\dots)\ts\dots\ts e_{k_r}(a_1,a_2,\dots).}
Since the elements $p_n$ generate the algebra $\La$ and $\Phi(p_n)\in A$,
the map $\Phi$ restricts to a Hopf algebra morphism
\eq{\Phi:\La\to\SQS(A).}

For a single element $a\in A$,
we have $\phi(M_\al)=0$ whenever $\ell(\al)\ge2$.
Therefore
\begin{equation}\label{Phi-M}
\Phi(M_\al)=a^{\al_1}\ts\dots\ts a^{\al_k},
\qquad \al\in\pN^k.
\end{equation}

\begin{remark}
Consider the non-unital algebra
\[
U=\bigoplus_{i\ge1}\bQ z_i,
\qquad
z_i\circ z_j=z_{i+j},
\]
which is isomorphic to the algebra $a\bQ[a]\subset\bQ[a]$ via $z_i\mapsto a^i$.
The Hopf algebra morphism
\[
\Phi:\QSym\to\QS(U),
\qquad
M_\al\mapsto z_{\al_1}\ts\dots\ts z_{\al_k},
\]
is an isomorphism, since it sends a basis to a basis.
The inverse of $\Phi$ was constructed in \cite[Theorem 3.4]{hoffman_algebra}.
The restriction of $\Phi$ to $\La$ induces an isomorphism $\Phi:\La\to\SQS(U)$.
\end{remark}

Using \cref{Phi-M} we obtain
\eq{
\Phi(p_n)=\Phi(m_{(n)})=a^n\in A,\qquad
\Phi(e_n)=\Phi(m_{(1^n)})=a^{\ts n}\in A^{\ts n}.
}
We can translate relations in $\La$ into relations in $\QS(A)$.
For example, let $Z:\QS(A)\to C$ be an algebra morphism.
The Newton identity \cite[I.2.11$'$]{macdonald_symmetric}
\[ne_n=\sum\nolimits_{i=0}^{n-1}(-1)^{n-i-1}e_{i}p_{n-i}\]
implies
\begin{equation}
nZ(a^{\ts n})
=\sum\nolimits_{i=0}^{n-1}(-1)^{n-i-1}Z(a^{\ts i})Z(a^{n-i}).
\end{equation}
Similarly, the formula
$e_n=\sum_{\la\prt n}(-1)^{n-\ell(\la)}z_\la\i p_\la$
\cite[I.2.14$'$]{macdonald_symmetric} implies
\begin{equation}
Z(a^{\ts n})=\sum_{\la\prt n}(-1)^{n-\ell(\la)}z_\la\i
\prod_{i=1}^{\ell(\la)}Z(a^{\la_i}).
\end{equation}

\section{Quasi-modularity}
\label{sec:qm}

We now apply the algebraic results of the previous section to multiple
$q$-zeta values.
The strategy is to find subalgebras $A\sbs\bQ\pser q^+$ for which
$Z(A)$ consists of quasi-modular forms and then apply
\cref{Z of TS} to the symmetric quasi-shuffle algebra $\TS(A)$.
We first treat level one using the algebra $R_2=\bQ[Q_2]$,
where $Q_2=\frac{q}{(1-q)^2}$, and then
pass to finite level by considering the $\la$-ring closure of~$R_2$.

\subsection{Quasi-modular forms}
We first fix our conventions for modular and quasi-modular forms,
including forms of finite level and their rational structures.
Let $\Ga\sbs\SL_2(\bZ)$
be a subgroup of finite index.
A \idef{modular form} of weight $k\ge0$ for $\Ga$ is a holomorphic function
$f:\bH\to\bC$ such that
\[
f\rbr{\frac{a\ta+b}{c\ta+d}}
=(c\ta+d)^k f(\ta)
\]
for every $\ga=\smat{a&b\\c&d}\in\Ga$,
and such that $f$ is holomorphic at every cusp of $\Ga$.

A \idef{quasi-modular form} of weight $k\ge0$
for $\Ga$ is a holomorphic function $f:\bH\to\bC$ for which there exist
holomorphic functions $f_0,\dots,f_p:\bH\to\bC$, with $f_0=f$, such that
\[
f\left(\frac{a\ta+b}{c\ta+d}\right)
=(c\ta+d)^k
\sum_{i=0}^p f_i(\ta)\rbr{\frac{c}{c\ta+d}}^i
\]
for every $\ga=\smat{a&b\\c&d}\in\Ga$,
and such that $f_0,\dots,f_p$ are holomorphic at every cusp of $\Ga$.
Let
\[\M(\Ga)=\bop_{k\ge0}\M_k(\Ga),\qquad
\QM(\Ga)=\bop_{k\ge0}\QM_k(\Ga)\]
be the graded algebras of modular and quasi-modular forms, respectively.
Then (see \cite{kaneko_quasimodular})
\[
\QM(\Ga)=\M(\Ga)[E_2].
\]
In particular, for $N\ge1$, consider the group
\[
\Ga_1(N)
=\sets{\smat{a&b\\c&d}\in\SL_2(\bZ)}
{c\equiv0\pmod N,\quad a,d\equiv1\pmod N},
\]
and let
\[
\M(N)=\M(\Ga_1(N)),
\qquad
\QM(N)=\QM(\Ga_1(N)).
\]
We have $\Ga_1(1)=\SL_2(\bZ)$ and
\[
\M(1)=\bC[E_4,E_6],
\qquad
\QM(1)=\bC[E_2,E_4,E_6].
\]
For $N\mid N'$, we have
\[
\M(N)\sbs\M(N'),
\qquad
\QM(N)\sbs\QM(N'),
\]
and we define
\[
\M(\infty)=\bigcup_{N\ge1}\M(N),
\qquad
\QM(\infty)=\bigcup_{N\ge1}\QM(N).
\]

Using $q=e^{2\pi i\ta}$, we have $\M(N),\QM(N)\sbs\bC\pser q$.
Let
\[\M(N;\bQ)=\M(N)\cap\QQ,\qquad
\QM(N;\bQ)=\QM(N)\cap\QQ.
\]
Then $\M(N)\iso\M(N;\bQ)\ts_\bQ\bC$
(see \eg \cite[Theorem 12.3.2]{diamond_modular}).
Since $\QM(N)=\M(N)[E_2]$ and $E_2\in\QQ$, it follows that
\[
\QM(N)\iso\QM(N;\bQ)\ts_\bQ\bC.
\]
The elements of
\[\QM=\QM(1;\bQ)=\bQ[E_2,E_4,E_6]
\]
will be called \idef{quasi-modular forms},
and the elements of
\[\QM(\infty;\bQ)=\QM(\infty)\cap\QQ\]
will be called \idef{quasi-modular forms of finite level}.

\subsection{Model algebras}
We next identify the algebra $R_2\sbs\bQ(q)$ that will produce
the level-one quasi-modular forms.
For a Laurent series $f=\sum_{i\in\bZ}f_iq^i\in\bQ\lser q$, let
\eq{\ord f=\inf\sets{i\in\bZ}{f_i\ne0}\in\bZ\sqcup\set\infty.}
In particular, $\ord(0)=\infty$.
For $f\in\bQ(q)$ and $c\in\bQ$, let $\ord_{c}f=\ord f(q+c)$.
Let
\eq{
\ord_\infty(f):=\ord f(q\i)=-\deg f,}
where $\deg(u/v)=\deg(u)-\deg(v)$ for $u,v\in\bQ[q]$.
Consider the involution
\[\ta:\bQ(q)\to\bQ(q),\qquad f\mto f(q\i).\]
Given an algebra $A\sbs \QQ$, let $A^+=A\cap \QQp$.

\begin{lemma}\label{deg<0}
Every element in $\bQ(q)^\ta\cap \QQp$ has degree $<0$.
\end{lemma}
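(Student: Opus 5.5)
Write $f\in\bQ(q)^\ta\cap\QQp$ as a nonzero rational function; the zero element has no degree issue, or we adopt $\deg 0=-\infty$. The plan is to compare the orders of $f$ at $0$ and at $\infty$ using the invariance $\ta(f)=f$. By definition $\ord_\infty(f)=\ord f(q\i)=\ord \ta(f)$. Since $\ta(f)=f$, this gives $\ord_\infty(f)=\ord f$, that is, $-\deg f=\ord f$.

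Next I would use that $f\in\QQp=q\QQ$. Here I must check that the Laurent expansion of the rational function $f$ at $q=0$ agrees with its given power series, which is clear because the embedding $\bQ(q)\hookrightarrow\bQ\lser q$ is the one used to regard elements of $\bQ(q)$ as $q$-series. So $\ord f\ge1$ for $f\ne0$. Combining the two facts gives $\deg f=-\ord f\le-1<0$.

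The only real point is the identity $\ord f(q\i)=-\deg f$ for $f=u/v$. For nonzero $u$, $u(q\i)=q^{-\deg u}\tilde u(q)$ with $\tilde u(0)\ne0$, where $\tilde u$ is the reversed polynomial. Hence $\ord u(q\i)=-\deg u$, and taking the quotient handles $u/v$. This is built into the paper's definition of $\ord_\infty$, so it needs no separate argument, and I expect no real obstacle. The only care needed is to treat $f=0$ separately, or to note that the statement concerns nonzero elements.
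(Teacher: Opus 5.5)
Your proof is correct and follows the paper's argument. The paper writes the element as $f/g$ with $f=q^kh$, $h(0)\ne0$, and computes that $\ta(f/g)=q^{n-k-m}\cdot(\text{unit at }0)$, so its order at $0$ is $-\deg(f/g)$. That is exactly your identity $\ord f(q^{-1})=-\deg f$; combining it with $\ta f=f$ and $\ord f\ge1$ gives the result, and your separate treatment of $f=0$ is a harmless detail the paper leaves implicit.
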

\begin{proof}
Let $f/g\in \bQ(q)^\ta\cap \QQp$, where $f,g\in\bQ[q]$ are coprime.
Let $f=q^k h$, where $h\in\bQ[q]$ satisfies $h(0)\ne0$,
and let $m=\deg h$, $n=\deg g$.
Then
\[f/g=\frac{f(q\i)}{g(q\i)}
=q^{n-k-m}\frac{q^m h(q\i)}{q^n g(q\i)}\]
vanishes at $q=0$ only if $n-k-m>0$.
Then $\deg(f/g)=k+m-n<0$.
\end{proof}

Recall that $Q_k=D^{k-1}\frac q{1-q}$ for $k\ge1$.
In particular,
\[Q_1=\frac q{1-q},\qquad a:=Q_2=\frac q{(1-q)^2}=Q_1+Q_1^2.
\]
We also define $Q_0=1$.
Since $a\i=q+q\i-2$, we have
\eq{\bQ(q)^\ta
=\bQ(q+q\i)=\bQ(a),\qquad
\bQ[q^{\pm1}]^\ta=\bQ[q+q\i]=\bQ[a\i].
}
Consider the algebras
\eq{
R_2=\bQ[Q_2]\sbs R_1=\bQ[Q_1]\sbs R=\bQ\sbr{q,\tfrac 1{q-1}}\sbs\QQ.
}

\begin{lemma}
We have
\begin{enumerate}
\item
Every sequence $(f_i)_{i\ge0}$ in $R_1$
such that $\ord_{1}(f_i)=-i$ forms a basis of $R_1$.
\item
Every sequence $(f_i)_{i\ge1}$ in $R_1^+$
such that $\ord_{1}(f_i)=-i$ forms a basis of $R_1^+$.
\item
$R_1=\sets{f\in R}{\deg f\le 0}=\bop_{k\ge0}\bQ Q_k$.

\end{enumerate}
\end{lemma}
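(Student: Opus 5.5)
The natural coordinate is $u=\frac1{1-q}$, so that $R=\bQ[q,\frac1{q-1}]=\bQ[u^{\pm1}]$. Indeed $q=1-u^{-1}$ and $u\i=1-q$. In this coordinate we have $Q_1=\frac q{1-q}=u-1$, hence $R_1=\bQ[Q_1]=\bQ[u]$. The order at $q=1$ is simply minus the $u$-degree: for a polynomial $p(u)$ of degree $i$ in $u$, $p=\sum c_j(1-q)^{-j}$ has a pole of order exactly $i$ at $q=1$. So $\ord_1(p)=-\deg_u p$.

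Part (1) is then the standard fact that polynomials with exactly one of each degree form a basis. Any $f\in R_1=\bQ[u]$ with $\ord_1 f=-i$ has $u$-degree exactly $i$. A sequence $(f_i)_{i\ge0}$ with $\deg_u f_i=i$ is triangular with nonzero diagonal with respect to the monomial basis $(u^i)$ of $\bQ[u]$. It is therefore a basis; formally, one shows by induction on $n$ that $f_0,\dots,f_n$ span the polynomials of degree $\le n$, and they are linearly independent because their degrees are distinct.

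For part (2), I first identify $R_1^+$. A polynomial $p(u)$ lies in $\QQp$ iff $p$ vanishes at $q=0$, i.e.\ at $u=1$. Thus $R_1^+=(u-1)\bQ[u]=Q_1\bQ[u]$, and this ideal has no nonzero constants, so every nonzero element has $u$-degree $\ge1$. The same triangularity argument applies inside this space. Given $f\in R_1^+$ of degree $n$, we subtract a multiple of $f_n$ to lower the degree while staying in $R_1^+$, and we terminate at $0$ since degree $0$ is impossible. Linear independence again follows from distinct degrees.

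For part (3), first note that an element of $R=\bQ[u^{\pm1}]$ is a Laurent polynomial $\sum_j c_ju^j$. Since $u=\frac1{1-q}$ has degree $-1$ in $q$, the term $u^{j}$ has $q$-degree $-j$. Hence $\deg f\le0$ iff no negative powers of $u$ occur: the lowest power $u^{j_0}$ dominates as $q\to\infty$, so $\deg f=-j_0$. This shows $\sets{f\in R}{\deg f\le0}=\bQ[u]=R_1$. For the last equality, I apply (1) to $f_i=Q_i$. We have $Q_0=1$, and $Q_k=D^{k-1}Q_1$ lies in $R_1$ because $D=q\frac d{dq}$ preserves $\bQ[u]$: from $Du=qu^2=u^2-u$ we get $D(\bQ[u])\subset\bQ[u]$. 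Moreover $D$ raises the pole order at $q=1$ by exactly one, since $D(u^j)=j(u^{j+1}-u^j)$. By induction $\ord_1 Q_k=-k$, so $(Q_k)_{k\ge0}$ is a basis and $R_1=\bop_{k\ge0}\bQ Q_k$.

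The only mildly delicate point is the precise pole order of $Q_k$ at $q=1$, which the formula for $D(u^j)$ settles. Everything else is bookkeeping in the variable $u$.
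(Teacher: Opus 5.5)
Your proof is correct. It rests on the same idea as the paper's, triangularity with respect to the pole order at $q=1$, but the argument is organized differently.

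The paper stays in the variable $q$. It sets $A=\{f\in R:\deg f\le 0\}$ and filters it by $W_d=\{g/(q-1)^d:\deg g\le d\}$. Using $\dim W_d=d+1$ and the independence of elements with distinct pole orders, it shows that \emph{any} sequence in $A$ with $\operatorname{ord}_1 f_i=-i$ is a basis of $A$. Applying this to $(Q_1^i)$ gives $R_1=A$ as a by-product. For $\operatorname{ord}_1 Q_k=-k$ it quotes the Eulerian polynomials, $Q_{k+1}=qP_k(q)/(1-q)^{k+1}$ with $P_k(1)\ne0$.

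Your substitution $u=1/(1-q)$ turns $W_d$ into the space of polynomials of degree $\le d$ in $u$, and this changes three things:
\begin{enumerate}
\item You identify $R_1=\mathbb{Q}[u]$ and $\{f\in R:\deg f\le0\}=\mathbb{Q}[u]$ directly, via $\deg u^j=-j$, with no dimension count.
\item You get the explicit description $R_1^+=Q_1\mathbb{Q}[u]$, where the paper only says ``similar''.
\item You replace the appeal to Eulerian polynomials by the identity $D(u^j)=j(u^{j+1}-u^j)$. This even gives the leading coefficient $(k-1)!$ of $Q_k$, i.e.\ $P_{k-1}(1)=(k-1)!$.
\end{enumerate}
The paper's route has the mild advantage of proving the basis criterion on the a priori larger space $A$ using only $\deg$ and $\operatorname{ord}_1$. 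Yours is shorter and fully self-contained.
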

\begin{proof}
We have $R_1\sbs A:=\sets{f\in R}{\deg f\le 0}$.
The subspace
\[W_d=\sets{f\in A}{\ord_{1}f\ge -d}
=\sets{g/(q-1)^d}{g\in\bQ[q],\,\deg g\le d}\]
has dimension $d+1$.
The functions $f_0,\dots,f_d$ are contained in $W_d$
and are linearly independent.
Indeed, if $\sum_{i=0}^k a_if_i=0$ with $a_k\ne0$,
then $g_k=(q-1)^kf_k\in\bQ[q]$ satisfies $a_kg_k(1)=0$.
Therefore $\ord_1 f_k>-k$, a contradiction.
This implies that $f_0,\dots,f_d$ is a basis of $W_d$ for every~$d$,
hence $(f_i)_{i\ge0}$ is a basis of $A$.
In particular, $Q_1^i\in R_1$ satisfies $\ord_{1}Q_1^i=-i$,
hence $(Q_1^i)_{i\ge0}$ is a basis of~$A$.
This implies $R_1=A$ and (1).
The proof of (2) is similar.

Let us prove that $R_1=\bop_{k\ge0}\bQ Q_k$.
Define the Eulerian polynomials $P_k\in\bN[q]$ by
(see \eg \cite{hirzebruch_eulerian})
\[\frac{q P_k(q)}{(1-q)^{k+1}}
=\sum_{n\ge0}(n+1)^k q^{n+1}
=D^k\frac{q}{1-q}=Q_{k+1}.
\]
They satisfy $P_k(1)\ne0$, hence $\ord_{1}Q_k=-k$.
Therefore $(Q_{k})_{k\ge0}$ is a basis of~$R_1$.
\end{proof}

\begin{lemma}
We have
$R_2=R_1^\ta=R^\ta$.
\end{lemma}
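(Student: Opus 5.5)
We need to prove $R_2=R_1^\ta=R^\ta$. The plan is to prove the chain of inclusions $R_2\sbs R_1^\ta\sbs R^\ta\sbs R_2$.

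\emph{First inclusion.} The element $Q_2=a=\frac{q}{(1-q)^2}$ satisfies $a\i=q+q\i-2$, so it is $\ta$-invariant. It lies in $R_1$ because $a=Q_1+Q_1^2$. Since $\ta$ is an algebra automorphism of $\bQ(q)$, we get $R_2=\bQ[a]\sbs R_1^\ta$.

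\emph{Second inclusion.} The inclusion $R_1^\ta\sbs R^\ta$ is immediate from $R_1\sbs R$.

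\emph{Third inclusion.} This is the real content: $R^\ta\sbs R_2$. Let $f\in R^\ta$. Since $\bQ(q)^\ta=\bQ(a)$, we can write $f=u(a)/v(a)$ with $u,v\in\bQ[t]$ coprime. I would instead exploit the pole structure directly.
\begin{enumerate}
\item Elements of $R=\bQ[q,(q-1)\i]$ have poles in $\bP^1$ only at $q=1$ and $q=\infty$.
\item By $\ta$-invariance, $\ord_\infty f=\ord_0 f$. Since $f\in R$ is regular at $0$, we get $\ord_0f\ge0$, hence $\deg f=-\ord_\infty f\le0$.
\item By part (3) of the previous lemma, this gives $f\in R_1$. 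So $R^\ta=R_1^\ta$, and it remains to show $R_1^\ta\sbs R_2$.
\item Let $f\in R_1^\ta$, so $f$ has a pole only at $q=1$, of some order $d$. Since $f$ is $\ta$-invariant and $\ta$ fixes $q=1$, the order $d$ should be even. Indeed, $a$ has a double pole at $1$ and $a-c$ has zeros at the two roots of $q+q\i-2=c\i$, which are swapped by $\ta$. More carefully, $\ta$ acts on the local parameter $s=q-1$ by $s\mapsto -s+s^2-\dots$, so the leading coefficient of $s^{-d}$ picks up the sign $(-1)^d$. Invariance therefore forces $d$ to be even, say $d=2m$.
\item Now induct on $m$. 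The function $a^m$ has leading term $s^{-2m}$ at $q=1$, since $a=q/s^2$. Subtract $c\,a^m$ to cancel the leading coefficient. The result $f-c\,a^m$ is still in $R_1^\ta$, with pole order $<2m$ at $1$, hence, by the parity step, of order $\le 2m-2$.
\item The base case is $m=0$: then $f$ has no poles on $\bP^1$, so $f$ is constant.
\end{enumerate}
This yields $f\in\bQ[a]=R_2$.

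An alternative route for the last step is to use $\bQ(q)^\ta=\bQ(a)$. One writes $f=u(a)/v(a)$ and observes that any root $c\neq0$ of $v$ produces poles of $f$ at the points $q$ with $a(q)=c$, which are not $q=1$ or $q=\infty$ unless $c=\infty$. A root $c=0$ gives poles at $q=0$ or $q=\infty$; the pole at $0$ is excluded since $f\in R$. Hence $v$ is constant and $f\in\bQ[a]$. Checking that $c=0$ is really excluded uses that $a$ vanishes simply at both $0$ and $\infty$, so a factor $a\i$ produces a pole at $0$.

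The main obstacle is this third step, specifically ruling out denominators in $a$. Either the parity argument at $q=1$ or the pole-location argument handles it, and I would present the second since it is cleaner given the identity $\bQ(q)^\ta=\bQ(a)$.
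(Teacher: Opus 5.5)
Your proof is correct, but your preferred route for $R^\ta\sbs R_2$ differs from the paper's. The paper divides $f\in R^\ta$ by a power of $a$ (recall $a\i=(q-1)^2/q$), which trades the pole at $q=1$ for one at $q=0$ and lands $f$ in the invariant Laurent polynomials $\bQ[q^{\pm1}]^\ta=\bQ[a\i]$. This gives $f\in\bQ[a^{\pm1}]$. Negative powers of $a$ are then excluded by the degree bound $\deg f\le0$ (from \cref{deg<0}, applied to $f-f(0)$) together with $\deg a^i=-i$.

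You instead use the field-level identity $\bQ(q)^\ta=\bQ(a)$ and locate poles. Writing $f=u(a)/v(a)$ in lowest terms, a root $c\neq0$ of $v$ gives a pole at a point $q\notin\{0,1,\infty\}$ with $a(q)=c$. The root $c=0$ gives a pole at $q=0$; for this only $a(0)=0$ and $u(0)\neq0$ matter, not the simplicity of the zero. This route needs neither the degree bound nor the detour through $R_1$. The only facts used are that elements of $R$ are regular off $\{1,\infty\}$ and that every finite value of $a$ is attained somewhere off $\{1,\infty\}$.

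Your first route is also complete: you get $\deg f\le0$ from $\ord_\infty f=\ord_0 f\ge0$, then induct on the pole order at $q=1$, which must be even because $\ta(s)=-s+s^2-\dots$ for $s=q-1$. It avoids even $\bQ(q)^\ta=\bQ(a)$ and is close in spirit to the $\ord_1$-filtration arguments of the basis lemmas, at the cost of an induction. The paper's proof is the shortest once the two invariant-ring identities are granted.
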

\begin{proof}
The element $a=Q_2$ satisfies $\ta a=a$ and $a=Q_1+Q_1^2$,
hence $R_2\sbs R_1^\ta\sbs R^\ta$.
If $f\in R^\ta$, then $f/a^n\in \bQ[q^{\pm1}]^\ta=\bQ[a\i]$ for some $n>0$.
Therefore $f\in\bQ[a^{\pm1}]$.
Since $\deg f\le0$ by \cref{deg<0} and $\deg a^i=-i$, we conclude that $f\in\bQ[a]=R_2$.
Therefore $R_2=R_1^\ta=R^\ta$.
\end{proof}

\begin{lemma}
\label{R2-Q2k}
We have
\begin{enumerate}
\item
Every sequence $(f_i)_{i\ge0}$ in $R_2$
such that $\ord_{1}(f_i)=-2i$
forms a basis of $R_2$.
\item
Every sequence $(f_i)_{i\ge1}$ in $R_2^+$
such that $\ord_{1}(f_i)=-2i$
forms a basis of $R_2^+$.
\item
$R_2=\bop_{k\ge0}\bQ Q_{2k}$.
\end{enumerate}
\end{lemma}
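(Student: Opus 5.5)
We follow the template of the preceding lemma on $R_1$, with $Q_1$ replaced by $a=Q_2$ and $\ord_1$ doubled. The key filtration is
\[V_d=\sets{f\in R_2}{\ord_1 f\ge -2d}.\]
We will show that $V_d=\bigoplus_{i=0}^d\bQ a^i$, so in particular $\dim V_d=d+1$. Since $a=q/(1-q)^2$ has $\ord_1 a=-2$, we get $\ord_1 a^i=-2i$. In a polynomial $\sum_{i\le n}c_ia^i$ with $c_n\ne0$, the top term has a strictly smaller order at $1$ than all the others, so it dominates. Hence $\ord_1$ of this polynomial equals $-2n$. Because $R_2=\bQ[a]$, this shows both that $V_d$ is spanned by $a^0,\dots,a^d$ and that $\ord_1$ takes only even values on $R_2\setminus\{0\}$.

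For (1), let $(f_i)_{i\ge0}$ be a sequence with $\ord_1 f_i=-2i$. Then $f_0,\dots,f_d\in V_d$. These elements are linearly independent by the same leading-term argument: in a relation $\sum_{i\le k}c_if_i=0$ with $c_k\ne0$, the term $c_kf_k$ has order $-2k$, strictly below the orders of all other terms, which is impossible. As $\dim V_d=d+1$, the elements $f_0,\dots,f_d$ form a basis of $V_d$. Since $R_2=\bigcup_d V_d$, the whole sequence $(f_i)_{i\ge0}$ is a basis of $R_2$.

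For (2), note that $R_2^+=aR_2=\bigoplus_{i\ge1}\bQ a^i$, because a polynomial in $a$ lies in $\QQp$ exactly when its constant term vanishes. So we run the same argument with $V_d\cap R_2^+=\bigoplus_{i=1}^d\bQ a^i$, which has dimension $d$, and the sequence $f_1,\dots,f_d$.

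For (3), it suffices by (1) to check two things: $Q_{2k}\in R_2$, and $\ord_1 Q_{2k}=-2k$. The order statement is already known from the previous lemma, since $\ord_1 Q_k=-k$ via the Eulerian polynomials with $P_k(1)\ne0$. The main point is therefore the $\ta$-invariance of $Q_{2k}$; combined with the previous lemma ($R_2=R^\ta$) and $Q_{2k}\in R$, it gives $Q_{2k}\in R_2$.

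For the invariance, we use that $\ta$ anticommutes with $D$. Indeed, $D(f(q\i))=-(Df)(q\i)$, so $\ta D=-D\ta$. We also have $\ta Q_1=\frac{q\i}{1-q\i}=\frac1{q-1}=-1-Q_1$. Since $D$ kills constants, for $k\ge1$
\[\ta Q_{2k}=\ta D^{2k-1}Q_1=(-1)^{2k-1}D^{2k-1}\ta Q_1=-D^{2k-1}(-1-Q_1)=Q_{2k}.\]
Together with $Q_0=1$, this gives $Q_{2k}\in R^\ta=R_2$ for all $k\ge0$. Hence (1) applies to the sequence $(Q_{2k})_{k\ge0}$, proving $R_2=\bigoplus_{k\ge0}\bQ Q_{2k}$.
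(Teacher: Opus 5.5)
Your proof is correct and follows essentially the same route as the paper. Parts (1)--(2) rest on the fact $\operatorname{ord}_1 P(a)=-2\deg P$: you phrase it through the filtration $V_d$ and a dimension count, while the paper writes $f_i=P_i(Q_2)$ with $\deg P_i=i$. Part (3) rests on $D\tau=-\tau D$ together with $R_2=R^\tau$ and $\operatorname{ord}_1 Q_k=-k$; the only cosmetic difference is that you apply the anticommutation to $D^{2k-1}$ with $\tau Q_1=-1-Q_1$, whereas the paper uses $D^2\tau=\tau D^2$ with $\tau Q_2=Q_2$.
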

\begin{proof}
Let $P=\sum_{i=0}^d a_ix^i\in\bQ[x]$ have degree $d$.
Since $\ord_{1}(Q_2^i)=-2i$,
we obtain
\[\ord_{1}P(Q_2)=-2d=-2\deg P.\]
Writing $f_i=P_i(Q_2)$ for $P_i\in\bQ[x]$,
the assumption $\ord_1(f_i)=-2i$ implies $\deg P_i=i$.
Hence $(P_i)_{i\ge0}$ is a basis of $\bQ[x]$, and therefore
$(f_i)_{i\ge0}$ is a basis of $R_2=\bQ[Q_2]$.
The proof of (2) is similar.

Let us prove that $R_2=\bop_{k\ge0}\bQ Q_{2k}$.
If $f\in\bQ(q)$, then $D\ta f=q\frac d{dq}f(q\i)=-q\i f'(q\i)=-\ta Df$.
Hence $D^2\ta=\ta D^2$.
Since $\ta Q_2=Q_2$, we conclude that $\ta Q_{2k}=Q_{2k}$, hence $Q_{2k}\in R_1^\ta=R_2$.
We have seen that $\ord_{1} Q_k=-k$ for all $k\ge0$.
Therefore $(Q_{2k})_{k\ge0}$ is a basis of $R_2$.
\end{proof}

\subsection{\tpdf{$\la$}{Lambda}-ring closures}
To pass from level-one to finite-level quasi-modular forms,
we enlarge the preceding algebras using the Adams operations $\psi^n(f)=f(q^n)$.
We determine the $\tau$-invariant parts of the corresponding closures explicitly;
this will provide the model algebra for \cref{thm2}.

Given a $\la$-ring $B$ and a subring $R\sbs B$,
define the $\la$-ring closure $\bbR\sbs B$,
to be the intersection of all $\la$-subrings of $B$ that contain $R$.
In particular, consider the $\la$-ring $\QQ$ with Adams operations $\psi^n(f)=f(q^n)$,
and the subrings
\[
R_2=\bQ[Q_2]\sbs R_1=\bQ[Q_1]\sbs R=\bQ\sbr{q,\tfrac 1{q-1}}\sbs \QQ.
\]
Then
\[
\bbR_2=\bQ\sbr{\psi^n(Q_2)\col n\ge1},\qquad
\bbR_1=\bQ\sbr{\psi^n(Q_1)\col n\ge1},\qquad
\bbR=\bQ\sbr{q,\tfrac 1{q^n-1}\col n\ge1}
.\]

\begin{theorem}\label{R2 closure}
We have (for $a=Q_2=\frac q{(1-q)^2}$)
\[\bbR_2=(\bbR_1)^\ta=(\bbR)^\ta
=\bQ[a,\psi^n(a)/a^n\col n\ge2].
\]
\end{theorem}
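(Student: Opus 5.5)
Let $S=\bQ[a,\psi^n(a)/a^n\col n\ge2]$. The plan is to prove the chain $\bbR_2\sbs S\sbs(\bbR_1)^\ta\sbs(\bbR)^\ta$ and then $(\bbR)^\ta\sbs\bbR_2$.

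\textbf{Step 1: $\bbR_2\sbs S$.} We have $\psi^n(a)=a^n\cdot\psi^n(a)/a^n\in S$, so $\bbR_2=\bQ[\psi^n(a)\col n\ge1]\sbs S$.

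\textbf{Step 2: $S\sbs(\bbR_1)^\ta$.} Each $\psi^n$ commutes with $\ta$, so $\psi^n(a)$ is $\ta$-invariant. Write $a^{-1}=(1-q)^2/q$ and $\psi^n(a)=q^n/(1-q^n)^2$. Then
\[\frac{\psi^n(a)}{a^n}=\frac{(1-q)^{2n}}{(1-q^n)^2}=\frac{(1-q)^{2n-2}}{(1+q+\dots+q^{n-1})^2}.\]
This is a $\ta$-invariant rational function. To place it in $\bbR_1$, use $1+q+\dots+q^{n-1}=(1-q^n)/(1-q)$, so that $1/(1+\dots+q^{n-1})=(1-q)/(1-q^n)$. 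Here $1-q=1/(1+Q_1)$ is not in $\bbR_1$, but $(1-q)^{2n}$ with $n\ge1$ is a polynomial, and $1/(1-q^n)=1+\psi^n(Q_1)\in\bbR_1$. Thus $\psi^n(a)/a^n=(1-q)^{2n}(1+\psi^n(Q_1))^2$. Since $q=Q_1/(1+Q_1)$, I still need to check that $q\in\bbR_1$; equivalently, that $1-q\in\bbR_1$. This may fail, and the fix is to work inside $\bbR$, where $q$ and $1/(q-1)$ are available. So the inclusion I will actually prove here is $S\sbs(\bbR)^\ta$, and I will handle the intermediate algebra $(\bbR_1)^\ta$ at the end.

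\textbf{Step 3: $(\bbR)^\ta\sbs S$ (main step).} Let $f\in\bbR$ be $\ta$-invariant. Any element of $\bbR$ has the form $g/(q^k\prod_n\Phi_n^{e_n})$, with $g\in\bQ[q]$ and $\Phi_n$ the cyclotomic polynomials. Since $q\in\bbR$, negative powers of $q$ are allowed only through the $\ta$-invariance structure. I will track the two constraints separately.

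\emph{Degree.} Invariance gives $\deg f\le0$ only after accounting for poles at $q=0$. Here $\ord_0 f\ge0$, since $\bbR\sbs\QQ$, and $\ta$ swaps $\ord_0$ with $\ord_\infty$, so $\deg f\le0$.

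\emph{Denominators.} The elements $\Phi_n(q)$ for $n\ge2$ are self-reciprocal up to a power of $q$: $q^{\varphi(n)}\Phi_n(q^{-1})=\Phi_n(q)$. Moreover $\Phi_1=q-1$ satisfies $q\Phi_1(q^{-1})=-\Phi_1(q)$, so in invariant expressions $(q-1)$ should appear squared, through $a$.

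Key claim: $S$ contains $a^{m}/\Phi_n^{2}$-type elements sufficient to clear every cyclotomic pole. Write $b_n:=\psi^n(a)/a^n=\big((1-q)^{n-1}/[n]_q\big)^2$, where $[n]_q=\prod_{d\mid n,d>1}\Phi_d$. Then $b_n^{-1}=[n]_q^2/(1-q)^{2n-2}$, and $\bQ(q)^\ta=\bQ(a)$ applies. My plan is to show that $f$ can be written as $P(a)\prod_n[n]_q^{-2c_n}$ times an invariant. Concretely, multiply $f$ by $\prod_n b_n^{-c_n}$, choosing the $c_n$ so that all cyclotomic poles $\Phi_d$ with $d\ge2$ are cleared; this uses only even powers. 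An odd pole order would need to be excluded, and I expect $\ta$-invariance together with $\bbR\sbs\QQ$ to force evenness; this is the delicate point, since, for instance, $1/\Phi_2$ is not invariant but $q/\Phi_2^2$ is. After clearing, the result lies in $R^\ta=R_2=\bQ[a]$ by the earlier lemma, after also handling the $q=0$ order. The remaining task is to show that the division by $b_n^{-c_n}$ keeps us in $S$ despite the extra factor $(1-q)^{2n-2}$ and the degree bookkeeping. This is done by induction on the total pole order, peeling off one $b_n$ at a time while using the degree bound $\deg\le0$ to ensure the leftover is a polynomial in $a$. This induction and the parity issue are the main obstacle.

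\textbf{Step 4: $S\sbs\bbR_2$ and closing the chain.} Since $\bbR_2$ is a ring containing $a,\psi^n(a)$, I must show that $\psi^n(a)/a^n\in\bQ[\psi^m(a)]$. Both sides are $\ta$-invariant and lie in $\bbR$, so I would instead deduce it from Step 3 applied with $S$ replaced by $\bbR_2$: running the same pole-clearing induction with generators $\psi^n(a)$ alone shows $(\bbR)^\ta\sbs\bbR_2$. That yields $\bbR_2=S=(\bbR)^\ta$. Finally, $(\bbR_1)^\ta$ sits between $\bbR_2$ and $(\bbR)^\ta$, because $a=Q_1+Q_1^2$ and $\psi^n$ preserves $\bbR_1$, so all four algebras coincide.
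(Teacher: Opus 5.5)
There is a genuine gap in Step 4, and it sits at the nontrivial core of the theorem. With the repair described below, Steps 1--3 together with your closing remark give only $\bbR_2\sbs(\bbR_1)^\ta\sbs(\bbR)^\ta=S$. The missing inclusion is $S\sbs\bbR_2$, i.e.\ $b_n=\psi^n(a)/a^n\in\bQ[\psi^m(a)\col m\ge1]$, and Step 4 does not prove it.

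Rerunning the pole-clearing argument ``with generators $\psi^n(a)$ alone'' is circular. Step 3 ends by writing $f=b_n^kP(a)$, which presupposes that $b_n$ is already available. If you clear the cyclotomic poles with $\psi^n(a)^{-k}$ instead of $b_n^{-k}$, you create a pole at $q=0$. You then only get $f=\psi^n(a)^kg(a\i)$ with $g$ a polynomial. Showing that such an element of degree $\le0$ lies in $\bbR_2$ is the original problem again, already for $f=b_n=\psi^n(a)\cdot a^{-n}$.

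The missing idea, used in the paper, is a B\'ezout argument:
\begin{itemize}
\item $\psi^n(a\i)=q^n+q^{-n}-2$ is a monic polynomial of degree $n$ in $a\i=q+q\i-2$.
\item Hence $f_n(a):=a^n\psi^n(a\i)=1/b_n$ is a polynomial in $a$ with $f_n(0)=1$, so it is coprime to $a^n$.
\item Therefore $a^nu+f_nv=1$ for some $u,v\in\bQ[a]$.
\item Dividing by $f_n$ gives $b_n=1/f_n=\psi^n(a)u+v\in\bbR_2$.
\end{itemize}
For instance, $b_2=1/(1+4a)=16\psi^2(a)+1-4a$.

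Secondarily, in Step 3 your expectation that $\ta$-invariance forces even pole orders is false for $\Phi_d$ with $d\ge3$, whose degree is even. For example, $q/\Phi_3=q(q-1)/(q^3-1)$ lies in $(\bbR)^\ta$ and has simple poles. Also, elements of $\bbR$ have no powers of $q$ in the denominator, contrary to the general form you wrote.

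Neither parity nor an induction is needed. Put $[n]_q=1+q+\dots+q^{n-1}$. If $f\in(\bbR)^\ta$ and $h=f(q^n-1)^k\in\bQ[q]$, then $fb_n^{-k}=h[n]_q^{k}/(q-1)^{k(2n-1)}$, which lies in $R$ and is $\ta$-invariant. So $fb_n^{-k}\in R^\ta=R_2=\bQ[a]$, giving $f\in b_n^k\bQ[a]\sbs S$ directly. This is a slightly cleaner variant of the paper's argument, which divides by $\psi^n(a)^k$, lands in $\bQ[a\i]$, and uses $\deg f\le0$ to rewrite $\psi^n(a)^kg(a\i)=b_n^k\cdot a^{kn}g(a\i)$.
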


\begin{proof}
Let $A=\bQ[a,\psi^n(a)/a^n\col n\ge2]$.
We will show that
\[\bbR_2\sbs(\bbR_1)^\ta\sbs(\bbR)^\ta\sbs A\sbs\bbR_2,\]
which will imply that all these algebras are equal.
The rings $(\bbR_1)^\ta$ and $(\bbR)^\ta$ are $\la$-rings,
since the Adams operations commute with $\ta$.
We have $R_2\sbs (\bbR_1)^\ta$, hence $\bbR_2\sbs (\bbR_1)^\ta\sbs (\bbR)^\ta$.

Let $f\in(\bbR)^\ta$.
Then $f\cdot (q^n-1)^k\in\bQ[q]$ for some $k,n\ge1$.
Therefore $f/\psi^n(a)^k\in\bQ[q^{\pm 1}]^\ta=\bQ[a\i]$,
hence $f=\psi^n(a)^k g(a\i)$ for some polynomial $g=\sum_{i=0}^d g_i x^i\in\bQ[x]$ of degree $d\ge0$.
We have $\deg(f)\le 0$ and
$\deg \rbr{\psi^n(a)^k g(a\i)}=-kn+d$, hence $d\le kn$.
Therefore
\[f=\rbr{\frac{\psi^n(a)}{a^n}}^k\cdot a^{kn}g(a\i)\in A.\]

To prove that $A\sbs\bbR_2$ we need to show that
$\psi^n(a)/a^n\in\bbR_2$.
We have $\psi^n(a\i)=q^n+q^{-n}-2\in \bQ[q^{\pm1}]^\ta=\bQ[a\i]$.
Therefore $\psi^n(a\i)$ is a polynomial in $a\i$ of degree $n$, with the leading coefficient $1$.
This implies that $f_n(a)=a^n \psi^n(a\i)$
is a polynomial in $a$ with $f_n(0)=1$.
Since $\gcd(a^n,f_n)=1$,
we have $a^n u+f_nv=1$ for some polynomials $u,v$ in $a$,
hence
$1/f_n=\frac {a^n}{f_n} u+v=\psi^n(a)u+v\in\bbR_2$.
Therefore $\psi^n(a)/a^n=1/f_n\in\bbR_2$.
\end{proof}

\subsection{Quasi-modularity results}
We now combine the algebraic criterion of \cref{Z of TS} with the
description of the model algebras above.
For $R_2$ the required one-variable $Z$-values are Eisenstein series,
which gives the level-one result directly.
For the $\lambda$-ring closure, the corresponding statement follows
from a quasi-modularity theorem for cyclotomic rational
functions.

\begin{customthm}{Theorem \ref{thm1}}
The image of the map $Z:\TS(R_2^+)\to\QQ$
is equal to $\QM(1;\bQ)$.
\end{customthm}
\begin{proof}
We know that $E_{2k}\in\QM(1;\bQ)=\bQ[E_2,E_4,E_6]$ for all $k\ge1$.
Therefore $Z(Q_{2k})\in\QM(1;\bQ)$ for all $k\ge1$.
Since $R_2^+=\bop_{k\ge1}\bQ Q_{2k}$ by \cref{R2-Q2k},
we conclude that $Z(R_2^+)\sbs\QM(1;\bQ)$.
Therefore $C=Z(\TS(R_2^+))\sbs\QM(1;\bQ)$ by \cref{Z of TS}.
Since $\TS(R_2^+)$ is a subalgebra of $\QS(R_2^+)$
and $E_2,E_4,E_6\in C$, we conclude that $C=\QM(1;\bQ)$.
\end{proof}

To prove \cref{thm2}, we will require
a reformulation of \cite[Theorem 1.1]{shintani_generalization}.
Let $\Phi_d\in\bZ[q]$ be the cyclotomic polynomial defined by
$q^n-1=\prod_{d\mid n}\Phi_d(q)$,
and let $\phi(d)=\deg\Phi_d$ be Euler's totient function.
We have
\[\Phi_d(q\i)
=(-1)^{\de_{d,1}}q^{-\phi(d)}\Phi_d(q).\]

\begin{theorem}
Let $k,n\ge1$ and let $Q\in \bQ[q]^+$
satisfy $\deg Q<k\phi(n)$ and $Q/\Phi_n^k\in\bQ(q)^\ta$.
Then $Z(Q/\Phi_n^k)\in\QM(n;\bQ)$.
\end{theorem}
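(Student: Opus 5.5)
The plan is to reduce the statement to classical Eisenstein series of level $n$ via a partial fraction decomposition over $\bQ(\zeta_n)$. The symmetry hypothesis $Q/\Phi_n^k\in\bQ(q)^\ta$ will supply exactly the parity condition that makes the resulting Eisenstein series holomorphic (quasi-)modular. This is the content of \cite[Theorem 1.1]{shintani_generalization}, so the alternative is to check that our hypotheses translate into Shintani's. Using $\Phi_n(q\i)=(-1)^{\de_{n,1}}q^{-\phi(n)}\Phi_n(q)$, the $\ta$-invariance becomes the palindromicity condition
\[q^{k\phi(n)}Q(q\i)=(-1)^{k\de_{n,1}}Q(q).\]
Together with $Q(0)=0$ and $\deg Q<k\phi(n)$, this should be Shintani's hypothesis. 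I would nevertheless sketch the direct argument, since it explains the statement.

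\emph{Step 1: partial fractions.} Let $f=Q/\Phi_n^k$. Since $\deg Q<k\phi(n)$, $f$ has no polynomial part and vanishes at $\infty$. Writing $g_1(x)=\frac{x}{1-x}$ and $g_j=D^{j-1}g_1$, we get a unique decomposition over $\bQ(\zeta_n)$:
\[f=c_0+\sum_{\zeta}\sum_{j=1}^{k}c_{\zeta,j}\,g_j(\zeta q),\]
where $\zeta$ runs over primitive $n$-th roots of unity. Here $D(g(\zeta q))=(Dg)(\zeta q)$, and the family $g_j(\zeta q)$ has poles of exact order $j$ at $q=\zeta\i$, so it spans the principal parts. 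The constant $c_0$ is forced to vanish by $f(0)=0$, using $g_j(0)=0$. Galois equivariance gives $c_{\si\zeta,j}=\si(c_{\zeta,j})$, because $f$ has rational coefficients.

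\emph{Step 2: use $\ta$-invariance.} From $g_1(x\i)=-1-g_1(x)$ and $D\ta=-\ta D$, we get $\ta\big(g_j(\zeta q)\big)=(-1)^{j}g_j(\zeta\i q)$ for $j\ge2$, and $-1-g_1(\zeta\i q)$ for $j=1$. Comparing principal parts in $\ta f=f$ gives $c_{\zeta\i,j}=(-1)^jc_{\zeta,j}$, and the leftover constant $-\sum_\zeta c_{\zeta,1}$ must then vanish.

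\emph{Step 3: compute $Z$.} We have $Z(g_j(\zeta q))=\sum_{m,d\ge1}m^{j-1}\zeta^m q^{md}$. Hence
\[Z(f)=\sum_{j=1}^k\sum_{m,d\ge1}m^{j-1}\chi_j(m)q^{md},\qquad \chi_j(m)=\sum_\zeta c_{\zeta,j}\zeta^m.\]
Each $\chi_j$ is $n$-periodic, takes rational values, and by Step 2 satisfies the parity relation $\chi_j(-m)=(-1)^j\chi_j(m)$. For $j=1$, the relation $\sum_\zeta c_{\zeta,1}=0$ adds $\chi_1(0)=0$. Such series $\sum_{m,d} m^{j-1}\chi(m)q^{md}$ with periodic $\chi$ of parity $(-1)^j$ are Eisenstein series $G_{j,\chi}$ for $\Ga_1(n)$, up to a constant term. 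Indeed, decomposing $\chi$ into additive characters gives the standard weight-$j$ Eisenstein series $\sum_{m\equiv a}\dots$ in the classical form of Diamond--Shurman. These are modular for $j\ge3$. For $j=2$ they are modular up to a multiple of $E_2$, hence quasi-modular. Adding the constant terms needed to complete them keeps everything in $\QM(n;\bQ)$, since constants are modular. Rationality follows because the coefficients are rational.

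\emph{Main obstacle.} The delicate cases are the low weights $j=1,2$. For $j=1$, holomorphic weight-one Eisenstein series exist only for odd $\chi$, which is exactly the parity $(-1)^1$ we have, and also require $\chi(0)=0$. For $j=2$, the non-holomorphic correction must be proportional to $E_2$ and not involve further terms. I would first verify these two cases carefully against the explicit Eisenstein series for $\Ga_1(n)$, checking that the parity-plus-vanishing conditions from Step 2 match the holomorphy conditions. Failing that, I would simply quote \cite[Theorem 1.1]{shintani_generalization} after checking the palindromic translation above.
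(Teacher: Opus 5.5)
Your argument is correct, but it takes a different route from the paper. The paper gives no proof of this statement: it quotes it as a reformulation of Shintani's Theorem~1.1 and leaves implicit the translation you record, namely that $\tau$-invariance of $Q/\Phi_n^k$ is the palindromic condition $q^{k\phi(n)}Q(q^{-1})=(-1)^{k\delta_{n,1}}Q(q)$.

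Your direct argument makes the statement self-contained and shows why these hypotheses are the right ones.
\begin{itemize}
\item After the partial fraction expansion $f=\sum_{\zeta,j}c_{\zeta,j}\,g_j(\zeta q)$, $\tau$-invariance is exactly the relation $c_{\zeta^{-1},j}=(-1)^jc_{\zeta,j}$. Hence $\chi_j(m)=\tfrac12\sum_\zeta c_{\zeta,j}\bigl(\zeta^m+(-1)^j\zeta^{-m}\bigr)$.
\item This exhibits $Z(f)$, up to constants, directly as a combination of the $q$-expansions of the Diamond--Shurman Eisenstein series $G_j^{(0,d)}$, with $\zeta=e^{2\pi i d/n}$ and $\gcd(d,n)=1$. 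No further decomposition into residue classes is needed.
\item These series are $\Gamma_1(n)$-invariant because $(0,d)\gamma\equiv(0,d)\pmod n$ for $\gamma\in\Gamma_1(n)$. This is where the level $n$ comes from.
\end{itemize}

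The low-weight cases you flag do go through.
\begin{itemize}
\item In weight $1$, Hecke's trick produces only a constant correction, so $G_1^{(0,d)}$ is holomorphic.
\item In weight $2$, the non-holomorphic term is a multiple of $1/\mathrm{Im}\,\tau$ that does not depend on $d$. So each $G_2^{(0,d)}$ differs from a multiple of $E_2$ by a holomorphic modular form.
\end{itemize}

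Two of your steps are redundant rather than wrong.
\begin{itemize}
\item The vanishing $\sum_\zeta c_{\zeta,1}=0$ (equivalently $\chi_1(0)=0$) already follows from $c_{\zeta^{-1},1}=-c_{\zeta,1}$. It also follows directly from $f(\infty)=0$, since $g_1(\infty)=-1$ and $g_j(\infty)=0$ for $j\ge2$.
\item Rationality is immediate from $Z(f)\in\bQ\pser q$ and $\QM(n;\bQ)=\QM(n)\cap\bQ\pser q$.
\end{itemize}

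The paper's citation buys brevity. Your route buys transparency and independence from the external reference, at the cost of relying on standard but weight-sensitive facts about Eisenstein series of level $n$.
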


Note that the condition $\deg Q<k\phi(n)$ is automatic by \cref{deg<0}.

\begin{customthm}{Theorem \ref{thm2}}
The image of the map
$Z:\TS(\bbR_2^+)\to\QQ$ is contained in $\QM(\infty;\bQ)$.
\end{customthm}
\begin{proof}
Every $f\in\bbR=\bQ\sbr{q,\tfrac 1{q^n-1}\col n\ge1}$
admits a unique partial fraction decomposition
\[f=P+\sum_{n,k\ge1}\frac{Q_{n,k}}{\Phi_n^k},\qquad
\deg Q_{n,k}<\phi(n),\]
where $P,Q_{n,k}\in\bQ[q]$.
If $f\in\bbR_2^+=(\bbR^\ta)^+$, then $\deg f<0$ by \cref{deg<0}.
Since every proper partial-fraction term has negative
degree, this implies $P=0$.
Therefore
\[f=\tfrac12(f+\ta f-f(0))
=\tfrac12\sum_{n,k\ge1}
\rbr{
\frac{Q_{n,k}(q)+(-1)^{k\de_{n,1}}q^{k\phi(n)}Q_{n,k}(q\i)}
{\Phi_n(q)^k}
-\frac{Q_{n,k}(0)}{\Phi_n(0)^k}}
.\]
Every summand
\[
g=\frac{Q(q)}{\Phi_n(q)^k}
:={
\frac{Q_{n,k}(q)+(-1)^{k\de_{n,1}}q^{k\phi(n)}Q_{n,k}(q\i)}
{\Phi_n(q)^k}
-\frac{Q_{n,k}(0)}{\Phi_n(0)^k}}
\]
is contained in $\bQ(q)^\ta$
and satisfies $g(0)=0$.
Therefore $\deg(g)<0$ by \cref{deg<0}, hence $\deg Q<k\phi(n)$.
Applying the above theorem, we conclude that $Z(g)\in\QM(\infty;\bQ)$.
Therefore $Z(f)\in \QM(\infty;\bQ)$,
hence $Z(\bbR_2^+)\sbs\QM(\infty;\bQ)$.
Now we apply \cref{Z of TS}.
\end{proof}


\bibliographystyle{halpha}
\bibliography{biblio}
\end{document}